\newcommand{\be}{\begin{equation}}
\newcommand{\ee}{\end{equation}}
\newcommand{\bea}{\begin{eqnarray}}
\newcommand{\eea}{\end{eqnarray}}
\newcommand{\bean}{\begin{eqnarray*}}
\newcommand{\eean}{\end{eqnarray*}}
\newcommand{\brray}{\begin{array}}
\newcommand{\erray}{\end{array}}
\newcommand{\ben}{\begin{equation}{nonumber}}
\newcommand{\een}{\end{equation}{nonumber}}
\newtheorem{dfn}{Definition}[section]
\newtheorem{thm}[dfn]{Theorem}
\newtheorem{lmma}[dfn]{Lemma}
\newtheorem{ppsn}[dfn]{Proposition}
\newtheorem{crlre}[dfn]{Corollary}
\newtheorem{xmpl}[dfn]{Example}
\newtheorem{rmrk}[dfn]{Remark}
\newcommand{\bdfn}{\begin{dfn}}
\newcommand{\bthm}{\begin{thm}}
\newcommand{\otp}{\ot_\gamma}
\newcommand{\ota}{\ot_{alg}}
\newcommand{\blr}{\begin{list}{$($\roman{cnt1}$)$} {\usecounter{cnt1}
        \setlength{\topsep}{0pt} \setlength{\itemsep}{0pt}}}
\newcommand{\bla}{\begin{list}{$($\alph{cnt2}$)$} {\usecounter{cnt2}
       \setlength{\topsep}{0pt} \setlength{\itemsep}{0pt}}}
\newcommand{\bln}{\begin{list}{$($\arabic{cnt3}$)$} {\usecounter{cnt3}
                \setlength{\topsep}{0pt} \setlength{\itemsep}{0pt}}}
\newcommand{\el}{\end{list}}
\newcommand{\blmma}{\begin{lmma}}
\newcommand{\bppsn}{\begin{ppsn}}
\newcommand{\bcrlre}{\begin{crlre}}
\newcommand{\bxmpl}{\begin{xmpl}}
\newcommand{\brmrk}{\begin{rmrk}}
\newcommand{\edfn}{\end{dfn}}
\newcommand{\ethm}{\end{thm}}
\newcommand{\elmma}{\end{lmma}}
\newcommand{\eppsn}{\end{ppsn}}
\newcommand{\ecrlre}{\end{crlre}}
\newcommand{\exmpl}{\end{xmpl}}
\newcommand{\ermrk}{\end{rmrk}}
\newcommand{\IC}{\mathbb{C}}
\newcommand{\IE}{{I\! \! E}}
\newcommand{\IN}{{I\! \! N}}
\newcommand{\IP}{{I\! \! P}}
\newcommand{\IR}{\mathbb{R}}
\newcommand{\IT}{\mathbb{T}}
\newcommand{\IZ}{\mathbb{Z}}
\newcommand{\innerl}{\left\langle}
\newcommand{\innerr}{\right\rangle}
\newcommand{\cla}{{\cal A}}
\newcommand{\clb}{{\cal B}}
\newcommand{\clc}{{\cal C}}
\newcommand{\cld}{{\cal D}}
\newcommand{\cle}{{\cal E}}
\newcommand{\clf}{{\cal F}}
\newcommand{\clg}{{\cal G}}
\newcommand{\clh}{{\cal H}}
\newcommand{\cli}{{\cal I}}
\newcommand{\clj}{{\cal J}}
\newcommand{\cll}{{\cal L}}
\newcommand{\clv}{{\cal V}}
\newcommand{\clw}{{\cal W}}
\newcommand{\cly}{{\cal Y}}
\def\a*{{\cal A}_{h,*}}
\def\B{{\cal B}(h)}
\def\B1{{\cal B}_1(h)}
\def\b{{\cal B}^{\rm s.a.}(h)}
\def\b1{{\cal B}^{\rm s.a.}_1(h)}
\newcommand{\ot}{\otimes}
\newcommand{\lgl}{\langle}
\newcommand{\rgl}{\rangle}
\begin{document}
\begin{center}
 \Large{\bf{A homomorphism theorem and a Trotter product formula for quantum stochastic flows with unbounded coefficients}}\\
 \vspace{0.15in}
{\large Biswarup Das,{\footnote {Indian Statistical Institute, Kolkata;~partially supported by UKIERI.}} }
{\large Debashish Goswami {\footnote{ Indian Statistical Institute, Kolkata; partially supported by Swarnajayanti Fellowship and Grant from DST, Govt. of India.}}}
{\large and Kalyan B. Sinha {\footnote { J.N. Centre for Advanced Scientific Research and Indian Institute of Science, Bangalore .}}}\\
\end{center}
\vspace{0.15in}
\
\begin{abstract}
We give a new method for proving the homomorphic property of a quantum stochastic flow satisfying a quantum stochastic differential equation with unbounded coefficients, under some further hypotheses. As an application, we prove a Trotter product formula for quantum stochastic flows and obtain quantum stochastic dilations of a class of quantum dynamical semigroups generalizing results of \cite{lingaraj} .
\end{abstract}

\section{Introduction}
The theory of quantum stochastic calculus  and of the quantum stochastic differential equations  have been developed over the last three decades (\cite{krp}, \cite{dgkbs}). These methods have many possible applications in the study of non-equilibrium and dissipative physical systems, for example, modeling of damped quantum harmonic oscillator \cite{dgkbs}. They are also of independent mathematical interest, particularly those with unbounded coefficients. 

However, a difficulty in dealing with quantum stochastic calculus with unbounded coefficients is the absence of a convenient method for proving the homomorphic property of a quantum stochastic flow. Suppose that $\mathcal{A}\subseteq B(h)$ is a von-Neumann or C*-algebra and $\mathcal{A}_0\subseteq\mathcal{A}$ is\textit{} a dense (in the appropriate topology) $\ast$-subalgebra, such that there exists a family of completely positive, contractive cocycles $j_t:\mathcal{A}\rightarrow\mathcal{A}^{\prime\prime}\ot B(\Gamma(L^2(\IR_+,k)))~(t\geq0),$ satisfying an equation of the form 
\[
j_t(x)=x+\sum_{\mu,\nu}\int_0^t j_s(\theta^\mu_\nu(x))\Lambda^\nu_\mu(ds), 
\]
where $k$ is a Hilbert space,  $\Gamma(L^2(\IR_+,k))$ is the symmetric Fock space over $L^2(\IR_+,k)$  and $(\theta^\mu_\nu)_{\mu,\nu}$ are linear maps on $\cla_0$. In general, there does not exist a convenient method for showing that such a map $j_t$ is a $\ast$-homomorphism.

Using the quantum It\^{o} formula, one can easily write down algebraic relations which are necessary for a quantum stochastic flow $(j_t)_{t\geq0}$  to be  $\ast$-homomorphic in general. But it is not known whether such algebraic conditions are also sufficient. In this paper, we prove that such algebraic conditions will be sufficient, if we furthermore assume some analytic conditions on the flow as well as the underlying (C* or von-Neumann) algebra. The crucial aspect of this proof is the implementation of an inductive procedure on the It\^{o} formula which is more natural in the set-up of quantum stochastic flows with unbounded coefficients than the usual iterative procedure.

We give a number of applications of the method, including a new Trotter product formula, generalizing the formulas obtained in \cite{lindsaykbs}.

\section{Notation and terminologies.}
\subsection{Some generalities on Hilbert spaces and completely positive maps}
We shall refer the reader to \cite{krp}, \cite{evans}, \cite{dgkbs} and references therein for the basics of the formalism of 
quantum stochastic calculus, which we briefly review here. All the Hilbert spaces appearing in this article will be separable. We adopt the convention that inner-product is linear in the right and conjugate linear in the left. Let $\IC$ and $\IN$ be the set of complex and natural numbers respectively. For $n\in \IN$, $M_n(\IC)$ will denote the set of $n\times n$ matrices with complex entries. For $A\in M_n(\IC)$, we will often write $A\equiv((~a_{ij}~))$, where $\{ a_{ij},i,j=1,\ldots,n\}$ are the entries of $A$. For %In fact, we shall mostly use the coordinate-free formalism as in \cite{book}, \cite{GS1}.
a Hilbert space $\clh$, let $I_\clh$ be the identity operator on $\clh$; write $I_n:=I_{\IC^n}$. We  denote the symmetric Fock space over $\clh$ by $\Gamma(\clh)$ and the exponential vector over $f\in\clh$ by $e(f)$. The vector $e(0)$ is called the vacuum vector. We adopt the convention that for $u\in\clh$, $|u\rgl\in\clh$ and $\lgl u|\in\clh^*$. For $u,v\in\clh$, the rank one operator $|u\rgl\lgl v|\in B(\clh)$ is given by 
$|u\rgl\lgl v|(|w\rgl):=\lgl v,w\rgl |u\rgl$, for all $
|w\rgl\in\clh$. $ Lin(\clv,\clw)$ will denote the space of linear maps from a subspace $D(L)$ (called the domain of $L$) of a vector space $\clv$ to another vector space $\clw$. The tensor product of Hilbert spaces or of operators will usually be denoted by $\ot$, whereas $\ot_{\rm alg}$ will be used for the  algebraic tensor product. We shall also use the projective tensor product $\ot_\gamma$ of Banach spaces, which will be explained later. 

Let $\clh_1 , \clh_2$ be two  Hilbert spaces
and $A\in Lin(\clh_1,\clh _1\ot \clh _2).$ Suppose that $\cld:=D(A)$. For each $f \in \clh  _2$, we define a linear operator $\lgl f, A \rgl$ with domain $\cld$ and  taking values in
$\clh_1$ such that   
\be  \label{2.1}  \lgl  u,\lgl f, A \rgl v \rgl = \lgl
u\ot f, Av\rgl   \ee  for $v \in \cld , ~ u \in \clh _1$.
  We  denote
by $\lgl A, f \rgl$ the adjoint of $\lgl f, A \rgl$,  whenever it exists.
 %Similarly, for any $T \in \clb(\clh_1 \ot
%\clh_2)$ and $f \in \clh_2$, one can define    $T_f \in \clb(\clh_1,\clh_1 \ot
%\clh_2)$ by setting $T_fu=T(u \ot f)$. \\

For $\phi\in Lin(D_0\ota\ V_0,\clh_1\ot\clh_2),$ where $D_0$ and $V_0$ are subspaces of $\clh_1$ and $\clh_2$ respectively, $f\in V_0$ and $g\in\clh_2$, define the partial trace of $\phi$ with respect to the rank one operator   $|f><g|$, denoted by $\innerl g,\phi f\innerr:D_0\longrightarrow\clh_1$ by:   
 \[\innerl u,\innerl g, \phi f\innerr v\innerr=\innerl u\ot g),\phi(v\ot f)\innerr,\quad (v\in D_0,u\in\clh_1).\] We also denote by $\phi_f \in Lin(D_0,\clh_1 \ot \clh_2)$ given by $\phi_f(v)=\phi(v \ot f)$.
 
Let $h$ and $\clh$ be Hilbert spaces, $\cla\subseteq B(h)$ be a $C^*$ or von-Neumann algebra, and $\psi_t:\cla\rightarrow B(h\ot\Gamma(L^2(\IR_+,\clh)))$, $t\geq0$ be a family of completely positive, contractive maps. For $c,d\in\clh$, we define a map $\psi^{c,d}_t:\cla\rightarrow B(h)$ as follows:

\be\label{contraction in Fock}
\psi^{c,d}_t(x):=\lgl e(c1_{[0,t)}), \psi_t(x)e(d1_{[0,t)})\rgl\quad(x\in\cla). 
\ee

We conclude this subsection with the following proposition, which will be needed to define a quantum stochastic cocycle.

\begin{ppsn}\label{matrix space tensor}
Let $\clh_1,\clh_2$ and $h$ be Hilbert spaces and $\cla\subseteq B(h)$ be a C*-algebra. Suppose that the maps $\psi_i:\cla\rightarrow\cla^{\prime\prime}\ot B(\clh_i)$, $i=1,2$ are completely positive, contractive and satisfy 
$\lgl\xi_i,\psi_i(x)\eta_i\rgl\in\cla$ for all $x\in\cla$ and
$\xi_i,\eta_i\in\clh_i,$ $i=1,2$. Then there exists a completely positive, contractive map, denoted by $\psi_1\bullet\psi_2$ from
$\cla$ to $\cla^{\prime\prime}\ot B(\clh_1\ot\clh_2)$ satisfying
\begin{enumerate}
\item[(i)]
\[\lgl\xi,(\psi_1\bullet\psi_2)(x)\eta\rgl\in\cla\quad(x\in\cla,\xi,\eta\in\clh_1\ot\clh_2).\]
\item[(ii)]
\[
\lgl\xi_1\ot\xi_2,(\psi_1\bullet\psi_2)(\cdot)(\eta_1\ot\eta_2)\rgl=
\lgl\xi_1,\psi_1(\lgl\xi_2,\psi_2(\cdot)\eta_2\rgl)\eta_1\rgl,
\]
 {\rm for~}  $\xi_i,\eta_i\in\clh_i${\rm ~( $i=1,2$)}.
\end{enumerate}
\end{ppsn}

\begin{proof}
Let $n\geq1$, $\omega:=\sum_{i=1}^{n}u_i\ot\xi_i\ot\eta_i\in h\ot\clh_1\ot\clh_2$, where $u_i\in h$, $\xi_i \in\clh_1$ and $\eta_i\in\clh_2$ for all $i=1,2,\ldots,n$. Note that the map $\IE_{\bm{\underline{\eta}}}:\cla^{\prime\prime}\ot B(\clh_2)\rightarrow M_n(\cla^{\prime\prime})$ ($\cong \cla^{\prime\prime}\ot M_n$) given by $\IE_{\bm{\underline{\eta}}}(X)=((\innerl \eta_i,X\eta_j\innerr))_{i,j=1}^n$ is completely positive. Thus, for $X\geq0$, we have 
\be\label{1 star}
0\leq\IE_{\bm{\underline{\eta}}}(X)\leq\|X\|\IE_{\bm{\underline{\eta}}}(1)=\|X\|1_{\cla^{\prime\prime}}\ot((\innerl\eta_i,\eta_j\innerr)).
\ee
Moreover, if $X(\geq0)$ is such that $\innerl\eta_i,X\eta_j\innerr\in\cla$ for all $i,j$, then $\IE_{\bm{\underline{\eta}}}(X)\in M_n(\cla)$ and 
$Y:=(\psi_1\ot id_{M_n})(\IE_{\bm{\underline{\eta}}}(X))$ is well-defined. As $\psi_1$ is completely positive and contractive, we get the following from \eqref{1 star}:
\[
0\leq (\psi_1 \ot {\rm id}_{M_n})((e_\alpha \ot 1)\IE_{\bm{\underline{\eta}}}(X)(e_\alpha\ot 1))\leq\|X\| 1_{\cla^{\prime\prime}\ot B(\clh_1)}\ot((\innerl\eta_i,\eta_j\innerr)) 
\]
where $\{e_\alpha\}_\alpha$ is any approximate unit for $\cla$. Taking limit with respect to $\alpha$, we get 
\be\label{2 star}
0\leq Y\leq\|X\| 1_{\cla^{\prime\prime}\ot B(\clh_1)}\ot((\innerl\eta_i,\eta_j\innerr)).
\ee
Clearly, $Y\in M_n(\cla^{\prime\prime}\ot B(\clh_1))$ has the matrix form $Y=((Y_{ij} ))$ where $Y_{ij}=\psi_1(\innerl\eta_i,X\eta_j\innerr)$, and by \eqref{2 star} for $u_i\in h$ ($i=1,2,\ldots,n$), 
\[
0\leq\sum_{i,j=1}^n\innerl u_i\ot\xi_i,\psi_1(\innerl\eta_i,X\eta_j\innerr)(u_j\ot\xi_j)\innerr\leq\|X\|\sum_{i,j=1}^n\innerl u_i,u_j\innerr\innerl\xi_i,\xi_j\innerr\innerl\eta_i,\eta_j\innerr. 
\]
As $n,u_i$ and $\eta_i$ are arbitrary, we conclude that there is a unique bounded positive operator $T_X$ (say) in $\cla^{\prime\prime}\ot B(\clh_1)\ot B(\clh_2)$ such that $\|T_X\|\leq\|X\|$ and $\innerl\omega, T_X\omega\innerr=\sum_{i,j=1}^n\innerl u_i\ot\xi_i,\psi_1(\innerl\eta_i,X\eta_j\innerr)(u_j\ot\xi_j)\innerr$ for $\omega=\sum_{i=1}^n u_i\ot\xi_i\ot\eta_i$. Taking $X=\psi_2(x)$ where $x\in\cla$ is positive, we denote by $(\psi_1\bullet\psi_2)(x)$ the operator $T_{\psi_2(x)}$, and then it is easy to extend the map $x\mapsto(\psi_1\bullet\psi_2)(x)$ linearly to all $x\in\cla$. Properties (i) and (ii) in the statement of the proposition now follow easily, thereby completing the proof.
\end{proof}

\begin{rmrk}
In case $\cla$ is a von-Neumann algebra and $\psi_i$ is normal for $i=1,2$, we have 
\[
\psi_1\bullet\psi_2=(\psi_1\ot id_{B(\clh_2)})\circ\psi_2. 
\]
In the more general case treated in \cite{lindsay-wills}, $\psi_1\bullet\psi_2$ coincides with $(\psi_1\ot_M id_{B(\clh_2)})\circ\psi_2$, where $\ot_M$ denotes the matrix space tensor product, introduced in \cite{lindsay-wills}.

\end{rmrk}

\subsection{Quantum stochastic analysis in symmetric Fock space}\label{Fundamental martingales}

For this subsection and rest of the paper, we fix a separable Hilbert space $k_0$, to be called the noise or multiplicity space. Suppose that $\{e_i\}_{i\in\cli}$ is an orthonormal basis for $k_0$, where $\cli=\{ 1,2,\ldots, {\rm dim}(k_0)\}$ if $k_0$ is finite dimensional and $\cli=\IN$ otherwise.  Let $\cle$ denote the subspace spanned by the exponential vectors in $\Gamma(L^2(\IR_+,k_0))$. Set $\Gamma:=\Gamma(L^2(\IR_+,k_0))$, $\Gamma^s_r:=\Gamma(L^2([s,r),k_0))$ ($s<r$), $\Gamma_t:=\Gamma(L^2([0,t),k_0))$ $(t\geq0)$ and $\Gamma^t:=\Gamma(L^2([t,+\infty),k_0))$.

We recall the four fundamental martingales of quantum stochastic calculus (see \cite{krp}):

\begin{enumerate}
\item[Time:]
\[
\{\Lambda^0_0(t)\}_{t\geq0}\in Lin(\cle,\Gamma):\Lambda^0_0(t)e(f):=te(f)\quad(t\geq0,e(f)\in\cle); 
\]
\item[Annihilation:]
\[
\{\Lambda^i_0(t)\}_{t\geq0}\in Lin(\cle,\Gamma):
\Lambda^i_0(t)e(f):=\int_0^t ds f^i(s)e(f)\quad(t\geq0,i\in\cli,e(f)\in\cle),
\]
where $f^i(s):=\lgl e_i,f(s)\rgl$.
\item[Creation:]
\[
\{\Lambda^0_i(t)\}_{t\geq0}\in Lin(\cle,\Gamma):
\Lambda^0_i(t):=\Lambda^i_0(t)^*\quad(t\geq0,i\in\cli)
\]
i.e. for $e(f),e(g)\in\cle$, 
\[
\lgl e(f),\Lambda^0_i(t)e(g)\rgl:=\int_0^t ds f_i(s)\lgl e(f),e(g)\rgl,
\]
where $f_i(s):=\overline{f^i(s)}$.
\item[Number:]
\[
\{\Lambda^i_j(t)\}_{t\geq0}\in Lin(\cle,\Gamma):
\Lambda^i_j(t)e(f):=\int_0^t ds~f^i(s)\Lambda^0_j(s)e(f)\quad(t\geq0,i,j\in\cli,e(f)\in\cle).
\]
\end{enumerate}

The fundamental martingales $\{\Lambda^\alpha_\beta\}_{\alpha,\beta\in\cli\cup\{0\}}$ satisfy the  quantum-It\^o formula (see page 127 of \cite{dgkbs}):

       $$\Lambda^\alpha_\beta(dt)\Lambda^{\alpha^\prime}_{\beta^\prime}(dt)=\hat{\delta}^\alpha_{\beta^\prime} \Lambda^{\alpha^\prime}_\beta(dt)$$ for
 $\alpha,\alpha^\prime,\beta,\beta^\prime=0,1,2,3...$, where 

\be
\begin{split}
\hat{\delta}^\alpha_\beta&:= 0 ~if~ \alpha=0~or~\beta=0\\
&:=\delta_{\alpha,\beta}~{\rm otherwise},
\end{split}
\ee 
$\delta_{\alpha,\beta}$ being the Kr\"onecker delta symbol.
\begin{rmrk}
It follows by a direct computation that 
\[
\lgl e(f),\Lambda^\mu_\nu(t)e(g)\rgl=\lgl \Lambda^\nu_\mu(t)e(f),e(g)\rgl,
\]
or in other words, we have $(\Lambda^\mu_\nu(t))^*=\Lambda^\nu_\mu(t)$, $t\geq0$.
\end{rmrk}

Let $(\theta_t)_{t\geq0}$ denotes the semigroup of unilateral shifts on $L^2(\IR_+,k_0)$, given by:
\[
(\theta_tf)(s)=f(s-t)1_{[t,+\infty)}(s)\quad(t\geq0,s\in\IR_+,f\in L^2(\IR_+,k_0)).
\]
Define $(\Gamma(\theta_t))_{t\geq0}\in B(\Gamma)$ as follows:

\be\label{second quant of time shift}
\Gamma(\theta_t)e(f):=e(\theta_t f)\quad(t\geq0,s\in\IR_+,f\in L^2(\IR_+,k_0)),
\ee
and extending linearly. It can be shown (see p. 169, Section 7.1 of \cite{dgkbs}) that $(\Gamma(\theta_t))_{t\geq0}$ becomes a $C_0$-semigroup of isometries in $\Gamma$. 

Let $\{I_i\}_{i\in\clj}$ be a collection of disjoint sub-intervals of $\IR_+$. It can be shown (page 31, Corollary 2.4.4 of \cite{dgkbs}), that
\begin{equation*}
\Gamma(L^2(\bigcup_i I_i,k_0))\cong\bigotimes_i\Gamma(L^2(I_i,k_0)), 
\end{equation*}
where in the right hand side, the tensor product is with respect to the stabilizing sequence $\{\Omega_{I_i}\}_{i\in\clj}$, $\Omega_{I_i}$ being the vacuum vector in $\Gamma(L^2(I_i,k_0))$, $i\in\clj$. Thus for $r,s\in\IR_+,r<s$, we can view $\Gamma^r_s$ as a subspace of $\Gamma$ via the identification :
\[
\Gamma(L^2([r,s),k_0))\cong\Omega_{[0,r)}\ot\Gamma(L^2([r,s),k_0))\ot\Omega_{[s,+\infty)},
\]
where the Hilbert space on the right hand side is a subspace of $\Gamma_r\ot\Gamma^r_s\ot\Gamma^s$.

Let $h$ be a separable Hilbert space and $\cla\subseteq B(h)$ be a C* or von-Neumann algebra. For $X\in\cla^{\prime\prime}\otimes B(\Gamma_s)$, it can be seen that $(I_h\ot\Gamma(\theta_t))(X\otimes I_{\Gamma^s})(I_h\ot\Gamma(\theta^*_t))$ is of the form $P_{12}(|\Omega_{[0,t)}><\Omega_{[0,t)}|\otimes X_1 \otimes I_{\Gamma^{t+s}})P_{12}^*$, where $X_1 \in\cla^{\prime\prime}\ot B(\Gamma^t_{t+s})$ and $P_{12}:\Gamma_t\otimes h\otimes\Gamma^t\longrightarrow h\otimes\Gamma_t\otimes\Gamma^t$ is the unitary flip between first and second tensor components. Set $\Xi_t(X):=X_1.$

\begin{ppsn}\label{proper xi}

\begin{enumerate}
\item[(I)]
Suppose that $\cla\subset B(h)$ is a C*-algebra. If $X\in\cla^{\prime\prime}\ot B(\Gamma_s)$ such that $\lgl \xi,X\eta\rgl\in\cla$ for all $\xi,\eta\in\Gamma_s$, then 
$\lgl \xi^\prime,\Xi_t(X)\eta^\prime\rgl\in\cla$, for all $\xi^\prime,\eta^\prime\in \Gamma^t_{t+s}$.
\item[(II)]
Suppose that $j_t:\cla\rightarrow\cla^{\prime\prime}\ot B(\Gamma);t\geq0$ is a family of completely positive maps such that 
\[
j_t=\widehat{j}_t\ot I_{\Gamma^t},~
j_t^{c,d}(\cla)\subseteq\cla\quad(t\geq0,c,d\in k_0),\]where $\widehat{j}_t$ is a completely positive map from $\cla$ to $\cla^{\prime\prime}\ot B(\Gamma_t)$. 

Then the map $\clj_{s+t}:= \widehat{j}_s\bullet(\Xi_s\circ \widehat{j}_t)$ is a well-defined map from $\cla$ to $\cla^{\prime\prime}\ot B(\Gamma_s\ot\Gamma^s_{t+s})$.
\end{enumerate} 
\end{ppsn}

\begin{proof}
Let $\xi^\prime,\eta^\prime\in \Gamma^t_{s+t}$. By virtue of the discussions on Fock space and shift operators preceding Proposition \ref{proper xi}, it follows that 
\begin{equation*}
\begin{split}
\innerl\xi^\prime, \Xi_t(X)\eta^\prime\innerr=\innerl\Gamma(\theta_t^*)(\Omega_{[0,t)}\ot\xi^\prime\ot\Omega_{[s+t,+\infty)}),(X\ot I_{\Gamma^s})\Gamma(\theta_t^*)(\Omega_{[0,t)}\ot\eta^\prime\ot\Omega_{[s+t,+\infty)})\innerr.
\end{split}
\end{equation*}
The right hand side of the above equation belongs to $\cla$, which proves (I). (II) follows by combining (I) and Proposition \ref{matrix space tensor}.
\end{proof}

\bdfn\label{cocycle definition}
 A family of contractive and completely positive maps $(j_t)_{t \geq 0}$ from $\cla$ to $\cla^{\prime \prime} \ot B(\Gamma)$ is called a quantum stochastic cocycle on $\cla$ if the family $(j_t)_{t\geq0}$ satisfies:

 \begin{enumerate}
 \item[(i)]{\rm ~Adaptedness:}
\[
j_{t}=\widehat{j}_t\ot I_{\Gamma^t},~j_t^{c,d}(x)\in\cla\quad(x\in\cla,c,d\in k_0), 
\]
where $\widehat{j}_t$ is a completely positive map from $\cla$ to $\cla^{\prime\prime}\ot B(\Gamma_t)$. 
\item[(ii)]{\rm ~Cocycle~property:\textbf{}}
\[
\widehat{j}_{t+s}(X)=\{\widehat{j}_s\bullet(\Xi_s\circ \widehat{j}_t)\}(X)\quad(X\in\cla).
\]
\end{enumerate} 
\edfn  

\begin{rmrk}\label{vacuum expectation semigroup}
 Note that by virtue of Proposition \ref{proper xi}, the right hand side of (ii) is well-defined. 
It also follows from (ii) of Definition \ref{cocycle definition} that $\{ j_t^{c,d}\}_{t \geq 0}$ is a semigroup of maps on $\cla$ (see \cite{accardi}). 
\end{rmrk}

If the C*-algebra $\cla$ is not closed in the ultraweak topology of $B(h)$, then we will refer to the norm topology on $\cla$ as its natural topology. On the other hand, if $\cla$ is a von-Neumann algebra, the ultraweak topology will be chosen as the natural topology. We call a semigroup $(T_t)_{t\geq0}$ of bounded 	maps on $\cla$, a $C_0$-semigroup if the map $t\rightarrow T_t(x)$ is continuous in the natural topology of $\cla$ for every fixed $x\in\cla$. 

We now introduce a special class of quantum stochastic cocycles which are of interest to us in this paper.

\bdfn\label{quantum stochastic flow definition}
A quantum stochastic cocycle $(j_t)_{t\geq0}$ on a C*-algebra $\cla$ is called a (C*-algebraic)  quantum stochastic flow on $\cla$ with the domain algebra $\cla_0$, structure maps $\{\theta^\mu_\nu\}_{\mu,\nu}\in Lin(\cla,\cla)$ and noise space $k_0$, where $\cla_0$ is a $\ast$-subalgebra of $\cla$, $\mu,\nu\in\{0\}\cup \cli$, if the following hold :

\begin{enumerate}

\item[(i)]
The map $t\rightarrow j_t(x)$ is weakly measurable for every $x\in\cla$, in the sense that
\[
t\mapsto \lgl u\ot\xi,j_t(x)(v\ot\eta)\rgl\quad(u,v\in h,\xi,\eta\in\Gamma)
\]
is a Borel-measurable map from $\IR_+$ to $\IC$.

\item[(ii)]
The domain algebra $\cla_0$ is norm-dense in $\cla$, $\cla_0\subseteq D(\theta^\mu_\nu),\mu,\nu\geq0$, and the family $\{j_t(x)\}_{t\geq0}$ satisfies a weak quantum stochastic differential equation of the following form: $\forall$ $u,v\in h,~f,g\in L^2(\IR_+,k_0)$ and $x\in\cla_0$,

\be   \label{fund}
\begin{split}
&\hskip-10pt\innerl j_t(x)ue(f),ve(g)\innerr\\
&=\innerl~xue(f),ve(g)\innerr+\sum_{\mu,\nu}\int_0^t ds \innerl~j_s(\theta^\mu_\nu(x))ue(f),ve(g)\innerr g^\mu(s)f_\nu(s),
\end{split}
\ee

or symbolically, 

\be\label{symbolicQSDE}
\begin{split}
dj_t(x)&=\sum_{\mu,\nu} j_s(\theta^\mu_\nu(x))\Lambda^\nu_\mu(ds);\\
&j_0(x)=x\ot I_\Gamma,
\end{split}
\ee

where we set $f_0(s)=f^0(s)=g_0(s)=g^0(s)=1$.

%\item[(iii)]
%The structure maps $\theta^\mu_\nu$ satisfy: for $x \in \cla_0$,
\end{enumerate}
 If $\cla$ is a von-Neumann algebra, we define a (von-Neumann algebraic) quantum stochastic flow on it in a similar way,  replacing the norm-density of $\cla_0$ by the ultra-weak density and also assuming the normality of 
  $j_t$ for each $t$. 
  
A quantum stochastic flow $(j_t)_{t\geq0}$ is called a $*$-homomorphic flow if furthermore 
$j_t$ is a $\ast$-homomorphism for each $t\geq0$.
\edfn
Often it is convenient to write the family of structure maps as a matrix $\Theta$ of maps given by:
\be\label{matrix}
\Theta:=
\left(
\begin{array}{ll}
\mathcal{L} & \delta^\dagger\\
\delta & \sigma\\
\end{array}
\right)
,\ee
where $\sigma:=\sum_{i,j}\theta^i_j(x)\ot|e_j><e_i|,$ $\delta(x):=\sum_{i}\theta^i_0(x)\ot |e_i\rangle$, $\delta^\dagger(x):=\sum_i\theta^0_i(x)\ot \langle e_i |$ and $\cll(x):=\theta^0_0(x),$ for $x\in\cla_0.$ We call $\Theta$ the structure matrix associated with the flow $(j_t)_{t\geq0}$.

\begin{rmrk}
Suppose that a family of contractive and completely positive flow $(j_t)_{t\geq0}$ satisfies (i) of Definition \ref{cocycle definition}. If furthermore $(j_t)_{t\geq0}$ satisfies the equation \eqref{fund} of Definition \ref{quantum stochastic flow definition} with bounded structure maps, then it follows that $(j_t)_{t\geq0}$ satisfies (ii) of Definition \ref{cocycle definition} (see page 171, Lemma 7.1.3 in \cite{dgkbs}).
\end{rmrk}

\begin{lmma}\label{homomorphism implies structure relations}
If $(j_t)_{t\geq0}$ is a $*$-homomorphic flow, then the structure maps $\{\theta^\mu_\nu\}_{\mu,\nu}$ satisfy :

\be\label{structure relations bdd}
\theta^\mu_\nu(xy)=\theta^\mu_\nu(x)y+x\theta^\mu_\nu(y)+\sum_{i \in \cli}\theta^i_\nu(x)\theta^\mu_i(y),~~ \theta^\mu_\nu(x)^*=\theta^\nu_\mu(x^*),
\ee
for $x\in\cla_0$.

\end{lmma}

\begin{proof}
The proof is an adaptation of the arguments given in Proposition 28.1 in page 234 of \cite{krp}. Since $t\mapsto j_t(x)$ is weakly measurable for all $x\in\cla$ and $(j_t)_{t\geq0}$ is a quantum stochastic cocyle, we have strong measurability of the map $t\mapsto j_t(x)ve(g)$ for all fixed $x\in\cla$, $v \in h$ and $g \in L^2(\IR_+,k_0)$. Hence we can re-phrase equation \eqref{fund} as
\be\label{strong integral}
\innerl ue(f),j_t(x)(ve(g))\innerr=\innerl u, xv \innerr e^{\innerl f,g\innerr}+\sum_{\mu,\nu}\int_0^tds~\innerl ue(f),j_s\left(\theta^\mu_\nu(x)\right)ve(g)\innerr f_{\mu}(s)g^{\nu}(s) .
\ee

Using the quantum It\^o formula as discussed in Subsection \ref{Fundamental martingales} we have:
\begin{equation*}
\begin{split}
0&=\innerl ue(f),\left(j_t(xy)-j_t(x)j_t(y)\right)ve(g)\innerr\\
&=\sum_{\mu,\nu}\int_0^tds~\innerl ue(f),j_s\left(\theta^\mu_\nu(xy)-\theta^\mu_\nu(x)y-x\theta^\mu_\nu(y)-\sum_{i \in \cli}\theta^i_\nu(x)\theta^\mu_i(y)\right)ve(g)\innerr f_\mu(s)g^\nu(s) 
\end{split}
\end{equation*}
from which it follows that $\theta^\mu_\nu(xy)=\theta^\mu_\nu(x)y+x\theta^\mu_\nu(y)+\sum_{i \in \cli}\theta^i_\nu(x)\theta^\mu_i(y)$.
Similarly, it follows from $j_t(x^*)=j_t(x)^*$ that $\theta^i_j(x^*)=(\theta^j_i(x))^*$ for all $i,j$.

\end{proof}

\begin{rmrk}\label{necessary for homo}
The relations \eqref{structure relations bdd} are equivalent to  the following identities:
\[
\pi(x):=\sigma(x)+x\ot I_{k_0}\quad(x\in\cla_0)
\]
is a $\ast$-homomorphism from $\cla_0$ to $\cla^{\prime\prime}\ot B(k_0)$.

\[
\delta(xy)=\delta(x)y+\pi(x)\delta(y)\quad(x,y\in\cla_0);
\]
\[
\cll(x^*)=(\cll(x))^*\quad(x\in\cla_0);
\]
\[
\delta(x)^*\delta(x)=\cll(x^*x)-\cll(x^*)x-x^*\cll(x)\quad(x\in\cla_0).
\]
\end{rmrk}
\begin{rmrk}
It follows from Lemma 7.1.3 of page 171 in \cite{dgkbs} that if the structure maps in Definition \ref{quantum stochastic flow definition} are norm bounded, then relations \eqref{structure relations bdd} are also sufficient for $(j_t)_{t\geq0}$ to be a $\ast$-homomorphic flow.
\end{rmrk}

\begin{lmma}\label{all semigroups are c0}
Let $(j_t)_{t\geq0}$ be a C* (respectively von-Neumann) algebraic quantum stochastic flow on $\cla$ with the noise space $k_0$, domain algebra $\cla_0$ and the structure matrix $\Theta=\begin{pmatrix}                                             
\cll&\delta^\dagger\\\delta&\sigma                                                                                                                 \end{pmatrix}$. Moreover in the von-Neumann algebraic case, we assume that $(j_t^{0,0})_{t\geq0}$ is $C_0$. Then $(j_t^{c,d}(\cdot))_{t\geq0}$ is a $C_0$-semigroup on $\cla$ with respect to the natural topology.  Furthermore,  the restriction of the generator of $(j_t^{c,d}(\cdot))_{t\geq0}$ to $\cla_0$ is $\cll+\innerl c,\delta \innerr+\delta^\dagger_d+\innerl c,\sigma_d\innerr+\innerl c,d\innerr id_\cla,$ where $\lgl c,\delta\rgl(x):=\lgl c,\delta(x) \rgl$, $\delta^\dagger_d(x):=\lgl d,\delta(x^*)\rgl^*$ and $\lgl c,\sigma_d\rgl(x):=\lgl c,\sigma(x)d\rgl$, for $x \in \cla_0$.  
\end{lmma}
\begin{proof}
It follows from Remark \ref{vacuum expectation semigroup} that $(j_t^{c,d})_{t\geq0}$ is a semigroup of maps on $\cla$. To prove the $C_0$ property, we proceed as follows. We shall most often write $c_t$ for $c1_{[0,t)},$
 where $c\in k_0,$  and obtain  for $x \in \cla_0$ 
\be\label{estimate for c0}
\begin{split}
&| \innerl u, \{ j_t^{c,d}(x)-x \}v \innerr|=|\innerl ue(c_t),j_t(x)ve(d_t)\innerr-\innerl u,xv\innerr|\\
&\leq |\innerl u(e(c_t)-e(0)),j_t(x)ve(0)\innerr|+|\innerl ue(0),j_t(x)ve(0)\innerr-\innerl u,xv\innerr|\\
&+|\innerl ue(c_t),j_t(x)v(e(d_t)-e(0)) \innerr|\\
&\leq \|u\|\|v\|\|x\|\left\{\sqrt{e^{t\|c\|^2}-1}+e^{\frac{t}{2}\|c\|^2}\sqrt{e^{t\|d\|^2}-1}\right\}+
|\innerl u,(j_t^{0,0}(x)-x)v \innerr|.
\end{split}
\ee

If $(j_t)_{t\geq0}$ is a C*-algebraic flow, it follows from equation \eqref{fund} that for $x\in\cla_0$, 
\[
\|j_t^{0,0}(x)-x\|\leq t\|\theta^0_0(x)\|.
\]
The norm density of $\cla_0$ in $\cla$, contractivity of the flow $(j_t)_{t\geq0}$ and the above estimate together imply that $(j_t^{0,0})_{t\geq0}$ is $C_0$ in the norm topology of $\cla$. Combining this with the estimate \eqref{estimate for c0}, it follows that $\lgl u,j_t^{c,d}(x)v\rgl\rightarrow \lgl u,xv\rgl$ as $t \rightarrow 0+$ uniformly for $u,v\in h$ such that $\|u\|\leq1$ and $\|v\|\leq1$. This implies that $(j_t^{c,d})_{t\geq0}$ is $C_0$ in the norm topology. 

On the other hand, if $(j_t)_{t\geq0}$ is a von-Neumann algebraic flow, then by hypothesis, the semigroup $(j_t^{0,0})_{t\geq0}$ is $C_0$ in the ultra-weak topology of $\cla$. Thus estimate \eqref{estimate for c0} implies that $j_t^{c,d}(x)\rightarrow x$ as $t\rightarrow 0+$ in the weak operator topology, hence also in the ultra-weak topology as $\| j_t^{c,d}(x)\|$ is uniformly bounded in $t$. This implies that $t\mapsto j_t^{c,d}(x)$ is ultra-weakly continuous for every $x\in\cla$.

Now for $x\in\cla_0$ and $u,v\in h,$ combining \eqref{fund} and \eqref{matrix}, we have
\be
\begin{split}
&\innerl ue(c1_{[0,t)}),j_t(x)ve(d1_{[0,t)})\innerr=\innerl u,xv\innerr e^{t\innerl c,d\innerr}\\
&+\innerl ue(c1_{[0,t)}),\{\int_0^t d\tau~ j_\tau(\cll(x)+\innerl c,\delta\innerr(x)+\delta^\dagger_d(x)+\innerl c,\sigma_d\innerr(x))\}ve(d1_{[0,t)})\innerr.\\
\end{split}
\ee
Thus 
\be
\begin{split}
&\text{lim}_{t\rightarrow 0+}\frac{1}{t}\lgl u,(j_t^{c,d}(x)-x)v\rgl=\lgl u,xv\rgl\lim_{t\rightarrow 0+}\frac{e^{t\lgl c,d\rgl}-1}{t}\\
&+\text{lim}_{t\rightarrow 0+}\lgl ue(c1_{[0,t)}),
\{\frac{1}{t}\int_0^t d\tau ~j_\tau(\cll(x)+\innerl c,\delta\innerr(x)+\delta^\dagger_d(x)+\innerl c,\sigma_d\innerr(x))\}ve(d1_{[0,t)})\rgl\\
&=\lgl u,\{\cll(x)+\lgl c,\delta\rgl(x)+\delta^\dagger_d(x)+\lgl c,\sigma_d\rgl(x)+\lgl c,d\rgl x\}v\rgl,\\
\end{split}
\ee
from which the conclusion follows.
\end{proof}

\bdfn 
 A $C_0$-semigroup of contractive (also normal in case of von-Neumann algebra) maps $(T_t)_{t\geq0}$ on $\cla$ is called a quantum dynamical semigroup if each $T_t$ is completely positive.
In case $\cla$ is unital, then a quantum dynamical semigroup $(T_t)_{t\geq0}$ is called  conservative   if 
$T_t(1)=1$ also holds for all $t$.
\edfn

\brmrk 
For a quantum stochastic flow satisfying the hypotheses of Lemma \ref{all semigroups are c0} the semigroup $(j_t^{0,0})_{t\geq0}$ is a quantum dynamical semigroup. It is called the vacuum expectation semigroup of the flow $j_t$.
\ermrk

\begin{rmrk}\label{wiener ito segal}
It is well known that if a classical stochastic process has chaotic decomposition property (\cite{meyer},\cite{krp}), then one can associate a quantum stochastic $\ast$-homomorphic flow with the process (see \cite{meyer},\cite{krp}). The class of stochastic processes having this property is rich and includes processes driven by Brownian motion and Poisson process.
\end{rmrk}

\subsection{Tensor product of Banach spaces}
Here we collect a few facts about the projective tensor product of Banach spaces which is an important technical tool, needed to prove our main result. Recall that (see \cite{takesaki}) for 
 two Banach spaces $E_1$ and $E_2$, the projective tensor product $E_1 \ot_\gamma E_2$ is the completion of the algebraic tensor product $E_1 \ot_{\rm alg} E_2$ in the cross-norm $\| \cdot \|_\gamma$
 given by $ \| X\|_\gamma={\rm inf} \sum_i \| x_i\| \|y_i\|$ for $X \in E_1\ot_{\rm alg} E_2$,  where the infimum is taken over all possible expressions of $X$ of the form $X=\sum_{i=1}^p x_i\ot y_i, ~x_i\in E_1,~ y_i\in E_2$ and $p\geq0$. \\

%   It is easy to see that a linear functional on $E_1 \ot_{\rm alg} E_2$ (equivalently, a bilinear functional)
% $\phi$ extends to a bounded linear functional on $E_1 \ot_\gamma E_2$ if and only if there is some constant $C$ such that 
%$|\phi(x \ot y)| \leq C \|x \| \| y \|$ for all $x \in E_1, y \in E_2$.

\begin{lmma}\label{bilinear}
Suppose that $T_j\in B(E_j,F_j)$  where $E_j,F_j$ for $j=1,2$ are Banach spaces. Then $T_1\ota T_2$ extends to a bounded operator  $$T_1\otp T_2:E_1\otp E_2\longrightarrow F_1\otp F_2$$ with the bound $$\|T_1\otp T_2\|\leq\|T_1\|\|T_2\|.$$
\end{lmma}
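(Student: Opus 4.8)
The plan is to build the operator first on the algebraic tensor product, establish the norm estimate there, and then extend by continuity using the fact that $E_1\ota E_2$ is dense in the completion $E_1\otp E_2$. First I would define $T_1\ota T_2$ on $E_1\ota E_2$ through its action on elementary tensors, $(T_1\ota T_2)(x\ot y)=T_1x\ot T_2y$, extended linearly. This is well defined: the assignment $(x,y)\mapsto T_1x\ot T_2y$ is bilinear from $E_1\times E_2$ into $F_1\ota F_2$, so by the universal property of the algebraic tensor product it factors uniquely through a linear map on $E_1\ota E_2$.

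The main step is the projective-norm estimate on $E_1\ota E_2$. Fix $X\in E_1\ota E_2$ together with any representation $X=\sum_{i=1}^n x_i\ot y_i$. Then $(T_1\ota T_2)(X)=\sum_{i=1}^n T_1x_i\ot T_2y_i$ is a representation of the image, so by the very definition of $\|\cdot\|_\gamma$ on $F_1\otp F_2$,
\[
\|(T_1\ota T_2)(X)\|_\gamma\leq\sum_{i=1}^n\|T_1x_i\|\,\|T_2y_i\|\leq\|T_1\|\,\|T_2\|\sum_{i=1}^n\|x_i\|\,\|y_i\|.
\]
Because this holds for every representation of $X$, taking the infimum over all such representations on the right-hand side yields
\[
\|(T_1\ota T_2)(X)\|_\gamma\leq\|T_1\|\,\|T_2\|\,\|X\|_\gamma.
\]

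Finally, since $E_1\ota E_2$ is dense in the Banach space $E_1\otp E_2$ and $T_1\ota T_2$ is a bounded linear map into the complete space $F_1\otp F_2$ with operator norm at most $\|T_1\|\,\|T_2\|$, the bounded linear extension theorem furnishes a unique continuous extension $T_1\otp T_2:E_1\otp E_2\raro F_1\otp F_2$ satisfying the same bound $\|T_1\otp T_2\|\leq\|T_1\|\,\|T_2\|$.

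I do not expect a genuine obstacle here; the only point that repays attention is the direction of the estimate. Each representation of $X$ induces a representation of $(T_1\ota T_2)(X)$, and hence furnishes an upper bound for $\|(T_1\ota T_2)(X)\|_\gamma$, which is itself an infimum over all representations of the image; optimizing over representations of $X$ on the right then produces $\|X\|_\gamma$. Getting this order of infima correct is precisely what makes the cross-norm inequality come out in the stated direction.
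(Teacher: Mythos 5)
Your proposal is correct and follows essentially the same route as the paper: establish the cross-norm estimate on the algebraic tensor product by bounding $\|(T_1\ota T_2)(X)\|_\gamma$ through an arbitrary representation $X=\sum_i x_i\ot y_i$, take the infimum over representations, and extend by density. Your remark about the order of the two infima is a slightly more careful articulation of the step the paper passes over silently, but the substance is identical.
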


 \begin{proof} 
 The proof of Lemma \ref{bilinear} is an easy consequence of the estimate
\be
\begin{split}
\hspace{-300cm}&\|(T_1\ota T_2)(\sum_{i=1}^k x_i\ot y_i)\|_\gamma\leq\sum_{i=1}^k\|T_1(x_i)\ot T_2(y_i)\|_\gamma\\
&\leq \sum_{i=1}^k\|T_1(x_i)\|_{F_1}\|T_2(y_i)\|_{F_2}\leq \|T_1\|\|T_2\|\sum_{i=1}^k\|x_i\|_{E_1}\|y_i\|_{E_2}.\\
\end{split}
\ee
\end{proof}

\begin{lmma}\label{semigroup}
Suppose that $(T_t)_{t\geq0}$ and $(S_t)_{t\geq0}$ are two $C_0$-semigroups of bounded operators on Banach spaces $E_1$ and $E_2$, with generators $L_1$ and $L_2$ respectively. Then $(T_t\otp S_t)_{t\geq0}$ becomes a $C_0$-semigroup of operators on $E_1\otp E_2$ whose generator is the closed extension of the operator $L_1\ota I_{E_2}+I_{E_1}\ota L_2$ (defined on $D(L_1)\ota D(L_2)$), the closure being taken with respect to $\|\cdot\|_\gamma$.
\end{lmma}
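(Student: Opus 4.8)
The plan is to verify, in order, the three requirements for $T_t \otp S_t$ to be a $C_0$ semigroup (uniform boundedness on compacts, the semigroup law, strong continuity) and then to identify its generator with the closure of $A := L_1 \ota 1 + 1 \ota L_2$ defined on $\cld := D(L_1)\ota D(L_2)$. The technical engine throughout is the cross-norm estimate of Lemma \ref{bilinear}, which is what lets every limit in the projective norm be controlled exactly as in the scalar case. First I would note that Lemma \ref{bilinear} makes each $T_t \otp S_t$ a bounded operator with $\|T_t \otp S_t\| \leq \|T_t\|\,\|S_t\|$; since $\|T_t\| \leq M_1 e^{\omega_1 t}$ and $\|S_t\| \leq M_2 e^{\omega_2 t}$ for the two $C_0$ semigroups, the family is uniformly bounded on every compact interval. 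The semigroup law is purely algebraic on elementary tensors, $(T_t \otp S_t)(T_s \otp S_s)(x\ot y) = T_{t+s}x \ot S_{t+s}y = (T_{t+s}\otp S_{t+s})(x\ot y)$, and extends to all of $E_1 \otp E_2$ by boundedness and the density of $E_1 \ota E_2$.

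For strong continuity at $t=0$ I would check it first on the dense subspace $E_1 \ota E_2$: writing $T_t x \ot S_t y - x \ot y = (T_t x - x)\ot S_t y + x \ot (S_t y - y)$ and applying the cross-norm estimate gives $\|(T_t \otp S_t)(x\ot y) - x\ot y\|_\gamma \leq \|T_t x - x\|\,\|S_t y\| + \|x\|\,\|S_t y - y\| \raro 0$. Combined with the uniform bound on compacts, a standard $\veps/3$ argument upgrades this to strong continuity on all of $E_1 \otp E_2$. Thus $T_t \otp S_t$ is a $C_0$ semigroup; call its generator $G$. Computing $G$ on elementary tensors of domain vectors, for $x \in D(L_1)$, $y \in D(L_2)$ the same splitting yields
$$\frac{1}{t}\big[(T_t \otp S_t)(x\ot y) - x\ot y\big] = \frac{1}{t}(T_t x - x)\ot S_t y + x \ot \frac{1}{t}(S_t y - y),$$
and since $\frac{1}{t}(T_t x - x)\raro L_1 x$, $S_t y \raro y$, and $\frac{1}{t}(S_t y - y)\raro L_2 y$, the cross-norm estimate shows the right-hand side converges in the $\otp$-norm to $L_1 x \ot y + x \ot L_2 y$. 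Hence $\cld \seq D(G)$ and $G$ agrees with $A$ on $\cld$; since $G$ is closed (being a generator) and $A \seq G$, the closure $\ol{A}$ exists and $\ol{A}\seq G$.

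The remaining and genuinely delicate step is the reverse inclusion, i.e. that $\cld$ is a core for $G$, so that $G = \ol{A}$. Here I would invoke the standard core criterion: a dense subspace contained in $D(G)$ and invariant under the semigroup is automatically a core for $G$. Density of $\cld$ in $E_1 \otp E_2$ follows because $D(L_1)$, $D(L_2)$ are dense in $E_1$, $E_2$ and the cross-norm estimate lets one approximate any elementary tensor, hence every element of $E_1\ota E_2$ and so (by completion) the whole space. Invariance follows from the elementary fact that $T_t D(L_1) \seq D(L_1)$ and $S_t D(L_2) \seq D(L_2)$ for $C_0$ semigroups, whence $(T_t \otp S_t)(x\ot y) = T_t x \ot S_t y \in \cld$. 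The core criterion then gives $G = \ol{A}$, completing the proof. I expect this last step to be the main obstacle, since it is the only place where one cannot argue tensor-by-tensor: one must pass from pointwise identification on $\cld$ to equality of the (closed) unbounded operators, and the correct tool is the invariance-plus-density core theorem rather than any direct computation.
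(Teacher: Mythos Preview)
Your proof is correct and follows essentially the same route as the paper: boundedness via Lemma~\ref{bilinear}, the semigroup law checked on elementary tensors and extended by density, and identification of the generator via the invariance-plus-density core criterion (which the paper cites from \cite{davies}). The only cosmetic difference is that the paper obtains strong continuity by factoring $T_t\otp S_t=(T_t\otp 1)\circ(1\otp S_t)$ and appealing to strong continuity of each factor, whereas you use the direct telescoping $(T_tx-x)\ot S_ty + x\ot(S_ty-y)$; your version is arguably more explicit, and you also supply the elementary computation of $G$ on $\cld$ that the paper leaves implicit.
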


\begin{proof}
We have $\left((T_t\ot_{\rm alg} S_t)\circ(T_s\ot_{\rm alg} S_s)\right)(X)=(T_{t+s}\ot_{\rm alg} S_{t+s})(X)$ for $X\in E_1\ota E_2$. Both sides being continuous in $\|\cdot\|_\gamma$, the above identity extends by Lemma \ref{bilinear} to $E_1\otp E_2$. Thus we have the semigroup property: 
$$(T_t\otp S_t)\circ(T_s\otp S_s)=(T_{t+s}\otp S_{t+s}).$$   By similar arguments $(T_t\otp I_{E_2})\circ(I_{E_1}\otp S_t)=T_t\otp S_t$ and thus the strong continuity of $T_t\otp I_{E_2}$ and $I_{E_1}\otp S_t$, as a function of t yields the strong continuity of $T_t\otp S_t.$  Hence $(T_t\otp S_t)_{t\geq0}$ is a  $C_0$-semigroup on $E_1\otp E_2$.  Moreover $T_t\otp S_t$ keeps $D(L_1)\ota D(L_2)$ invariant. Thus $D(L_1)\ota D(L_2)$ is a core for the generator of $T_t\otp S_t$ (see \cite{davies}). We will denote the generator by $L_1\otp I_{E_2}+I_{E_1}\otp L_2$. Clearly, this is the closure of the operator $L_1\ota I_{E_2}+I_{E_1}\ota L_2$.
\end{proof}

\begin{crlre}\label{projective core}
In the notation of Lemma \ref{semigroup}, suppose that $\cld_1\subset E_1$ and $\cld_2\subset E_2$ are cores for the generators $L_1$ and $L_2$ respectively. Then $\cld_1\ot_{\rm alg}\cld_2$ is a core for $L_1\otp I_{E_2}+I_{E_1}\otp L_2$. 
\end{crlre}

\begin{proof}
 Recall that $D(L_1)\ot_{\rm alg} D(L_2)$ is a core for the generator $L_1\otp I_{E_2}+I_{E_1}\otp L_2$. Let $X:=\sum_{i=1}^k x_i\ot y_i\in D(L_1)\ot_{\rm alg} D(L_2)$, where $x_i\in D(L_1)$ and $y_i\in D(L_2)$, $i=1,\ldots,k$. Get $(x^{(i)}_n)_n\in \cld_1$ and $(y^{(i)}_n)_n\in\cld_2$, $i=1,\ldots,k$ such that $x^{(i)}_n\rightarrow x_i$, $L_1(x^{(i)}_n)\rightarrow L_1(x_i)$, $y^{(i)}_n\rightarrow y_i$, $L_2(y^{(i)}_n)\rightarrow L_2(y_i)$. Let 
$X_n:=\sum_{i=1}^k x^{(i)}_n\ot y^{(i)}_n$. Then it follows that $X_n\rightarrow X$ and $(L_1\otp I_{E_2}+I_{E_1}\otp L_2)(X_n)\rightarrow (L_1\otp I_{E_2}+I_{E_1}\otp L_2)(X)$ in $\|\cdot\|_\gamma$. This proves the result.
\end{proof}

We conclude this section with the following result about operators on Banach spaces, which will play a crucial role in the proof of homomorphism property in Section 3.
\begin{lmma}\label{dg}
Let $E$ be a Banach space, and let $A$ and $T$ belong to $Lin(E,E)$ with dense domains $D(A)$ and $D(T)$ respectively. Suppose that there is a total set $D\subset D(A)\cap D(T)$ with the properties :
\begin{enumerate}
\item[] {\bf(i)} $A(D)$ is total in $E$,  ~~{\bf(ii)} $\|T(x)\|<\|A(x)\|$ for all $x\in D.$ 
\end{enumerate}
Then $(A+T)(D)$ is also total in $E$.
\end{lmma}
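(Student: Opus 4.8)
The plan is to argue by duality. By the Hahn--Banach theorem, $(A+B)(D)$ is total in $E$ precisely when the only $\phi \in E^*$ annihilating $(A+B)(D)$ is $\phi = 0$. So I would fix $\phi \in E^*$ with $\phi((A+B)(x)) = 0$ for every $x \in D$ and try to deduce $\phi = 0$. Expanding gives the basic relation $\phi(A(x)) = -\phi(B(x))$ for all $x \in D$, so that by hypothesis (ii),
$$|\phi(A(x))| \le \|\phi\|\,\|B(x)\| < \|\phi\|\,\|A(x)\| \qquad (x \in D,\ x\neq 0).$$
Since by (i) the vectors $A(x)$, $x \in D$, are total in $E$, this says that on a dense set of vectors $y = A(x)$ the bounded functional $\phi$ is strictly dominated, $|\phi(y)| < \|\phi\|\,\|y\|$. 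The naive temptation is to take the supremum over this dense set, where $\sup_y |\phi(y)|/\|y\| = \|\phi\|$, and conclude $\|\phi\|=0$; but a strict pointwise inequality does not prevent the supremum from equalling $\|\phi\|$ (it need only fail to be attained), so this step is not by itself legitimate and must be replaced by a quantitative argument.

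To make the domination quantitative I would introduce the linear map $T$ determined on $A(D)$ by $T(A(x)) = B(x)$. Note first that (ii) forces $A$ to be injective on the linear span of $D$: if $A(z) = 0$ for such a $z$, then, $z$ lying where (ii) is available, $\|B(z)\| < \|A(z)\| = 0$ is impossible unless $z = 0$; this injectivity is exactly what makes $T$ well defined, with $\|T(y)\| < \|y\|$ for every nonzero $y \in A(D)$. Granting a genuine contraction bound for $T$, it extends to a bounded operator $\overline{T}$ on $E$ for which $I + \overline{T}$ is boundedly invertible by a Neumann series; and since $(A+B)(x) = (I+\overline{T})(A(x))$, one would get $(A+B)(D) = (I+\overline{T})(A(D))$, the image of a total set under a homeomorphism, hence total. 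Equivalently, on the dual side the relation $\phi = -\,\phi\circ\overline{T}$ would force $\phi = 0$ as soon as $\|\overline{T}\| < 1$.

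The main obstacle is precisely the hinge I have underlined: passing from the pointwise strict inequality $\|B(x)\| < \|A(x)\|$ (equivalently $\|T(y)\| < \|y\|$ on the dense set $A(D)$) to a bound that actually excludes $-1$ from the spectrum of the closure $\overline{T}$. A pointwise strict contraction yields only $\|\overline{T}\| \le 1$, so invertibility of $I + \overline{T}$ is not automatic, and I would expect the real content of the proof to lie in extracting a uniform estimate --- or, dually, in showing directly that $\overline{T}^{\,*}$ admits no eigenvector with eigenvalue $-1$ --- from the combination of hypotheses (i) and (ii). This is also the point where the precise meaning of ``total set'' matters: the well-definedness of $T$ and the injectivity of $A$ above really use that the relevant combinations lie in $D$, so I would pay close attention to whether $D$ may be taken to be a subspace and to what additional regularity of $D$ (beyond mere totality) the argument tacitly requires.
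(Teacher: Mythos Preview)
Your instinct is right, and the obstacle you isolate is not an artefact of your method: the lemma as stated is actually false. Take $E=\ell^2$ with orthonormal basis $(e_n)_{n\ge1}$, let $D=\{e_n:n\ge1\}$, $A=I$, and set $B(e_1)=B(e_2)=-\tfrac12(e_1+e_2)$, $B(e_n)=0$ for $n\ge3$ (extended linearly to $\operatorname{span}D$). Then $D$ and $A(D)=D$ are total, and $\|B(e_n)\|\le 2^{-1/2}<1=\|A(e_n)\|$ for every $n$; yet $(A+B)(e_1)=\tfrac12(e_1-e_2)=-(A+B)(e_2)$ and $(A+B)(e_n)=e_n$ for $n\ge3$, so $(A+B)(D)$ spans only the hyperplane $(e_1+e_2)^{\perp}$. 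In your language, the induced map $T=BA^{-1}$ satisfies $\|Ty\|<\|y\|$ for each $y\in D$ while $\|\overline T\|=1$ and $-1$ is an eigenvalue of $\overline T$ (eigenvector $e_1+e_2$): precisely the failure you anticipated when you warned that a pointwise strict inequality on a total set does not exclude $-1$ from the spectrum.

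The paper's argument is superficially different but breaks at the same place. It assumes $(A+B)(D)$ is not total, chooses $y_0=A(x_0)\in A(D)$ outside the closed span $F$ of $(A+B)(D)$, and then asserts the existence of $\Lambda\in E^*$ with $\|\Lambda\|=1$, $\Lambda|_{F}=0$ \emph{and} $|\Lambda(y_0)|=\|y_0\|$; from this the contradiction $\|y_0\|=|\Lambda(Ax_0)|=|\Lambda(Bx_0)|\le\|Bx_0\|<\|Ax_0\|=\|y_0\|$ would follow. But Hahn--Banach only yields $|\Lambda(y_0)|=\operatorname{dist}(y_0,F)$ when $\Lambda|_F=0$, and there is no reason for this distance to equal $\|y_0\|$; in the counterexample above one has $\operatorname{dist}(e_1,F)=2^{-1/2}<1=\|e_1\|$, so the chain collapses. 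Thus your hesitation to ``take the supremum'' was the correct diagnosis of the difficulty, and the paper's shortcut does not repair it. A hypothesis that \emph{does} make your Neumann-series argument go through is a uniform bound $\|Bx\|\le c\,\|Ax\|$ with $c<1$ on $\operatorname{span}D$; note that the strict inequality on the mere total set $D$, which is all the paper's application actually verifies, is genuinely weaker.
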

\begin{proof}
%First note that if  then 
Set $F:= span\{(A+T)(D)\}$ and suppose that $\overline{F}\neq E$. Then noting that if $A(D)\subseteq(A+T)(D)$, one has  $\overline{F}\supset\text{span}~\overline{A(D)}=E$ (by hypothesis), we conclude that there exists a  $y_0(\neq0)$ in $A(D)$, such that $y_0\notin\overline{F}$. Let $y_0=A(x_0)$ for some $x_0\in D.$ Then by Hahn-Banach theorem, there exists $\Lambda\in E^\ast,$ the topological dual of $E$, such that $\|\Lambda\|=1,$  $|\Lambda(y_0)|=\|y_0\|$ as well as $\Lambda((A+T)(D))=0.$ Then $\|y_0\|=|\Lambda(A(x_0))|$ and $|\Lambda(A(x_0))|=|\Lambda(T(x_0))|.$\\ But $|\Lambda(T(x_0))|\leq\|T(x_0)\|<\|A(x_0)\|=\|y_0\|$ which leads to a contradiction. Therefore $\overline{F}=E.$
\end{proof}

\section{A sufficient condition for a quantum stochastic flow to be $\ast$-homomorphic}

\subsection{Assumptions and statement of the main result}\label{assumptions n impli}

Let $(j_t)_{t\geq0}$ be a quantum stochastic flow with the noise space $k_0$, domain algebra $\cla_0$ and structure matrix $\Theta=\begin{pmatrix}
\cll&\delta^\dagger\\\delta&\sigma                                                                                                                                                                                                                                                                                   \end{pmatrix}
$. Our aim is to give a set of natural and sufficient conditions which will imply that the flow is $\ast$-homomorphic. In the following, note that the first assumption {\bf A(1)} is nothing but those given in Remark \ref{necessary for homo} which are known to be necessary by Lemma \ref{homomorphism implies structure relations}.

We now state the assumptions:

\begin{enumerate}

\item[{\bf A(1)}]
The map $\pi:\cla_0\rightarrow\cla^{\prime\prime}\ot B(k_0)$ given by:
\[
\pi(x):=\sigma(x)+x\ot I_{k_0}\quad(x\in\cla_0)
\]
is a $\ast$-homomorphism from $\cla_0$ to $\cla^{\prime\prime}\ot B(k_0)$.
\[
\cll(x^*)=(\cll(x))^*\quad(x\in\cla_0);
\]

\[
\delta(xy)=\delta(x)y+\pi(x)\delta(y)\quad(x,y\in\cla_0);
\]

\[
\delta(x)^*\delta(x)=\cll(x^*x)-\cll(x^*)x-x^*\cll(x)\quad(x\in\cla_0).
\]

\item[{\bf A(2)}]
There exists a semifinite, faithful and lower-semicontinuous trace $\tau$ on $\cla$, such that $\cla_0\subset D(\tau)$. Setting $h:=L^2(\tau)$, the G.N.S. Hilbert space of $\cla$ with respect to $\tau$ and viewing $\cla$ as a C*-subalgebra of $B(h)$, $\cla_0$ is furthermore assumed to be dense in $h$.

\item[{\bf A(3)}]
\begin{enumerate}
\item[(i)]
For each $t\geq0$, the semigroup of maps $(T_t)_{t\geq0}$ extends to a $C_0$ contractive semigroup $(T_t^{(2)})_{t\geq0}$ on $h$. Its generator is denoted  by $\cll_{2}$.

Furthermore, $\cla_0\subseteq D(\cll)\cap D(\cll_2)$ and is a core for $\cll_2$.

\item[(ii)]
For $x\in\cla_0$, $\cll(x^*x)\in\cla\cap L^1(\tau)$ and $\tau(\cll(x^*x))\leq0$ (a kind of weak dissipativity).

\item[(iii)]
The semigroup $(T_t^{(2)})_{t\geq0}$ is analytic.
\end{enumerate}

\item[{\bf A(4)}]
\[
\text{sup}_{0\leq s\leq t}|\lgl uf^{\ot^m} j_s(x)vg^{\ot^n}\rgl|\leq C(m,n,u,v,t,f,g)\|x\|_1\quad(u,v\in h,x\in\cla\cap L^1(\tau)),
\]
where $m,n\in\IN$, $C(m,n,u,v,t,f,g)=O(e^{\beta t})$ for $\beta>0$ and $f,g\in\clw$ for some total subset $\clw$ of $L^2(\IR_+,k_0)$. 
\end{enumerate}

We now state the main theorem of this paper:
\bthm\label{main theorem}
Let $(j_t)_{t\geq0}$ be a quantum stochastic flow with noise space $k_0$ and structure matrix $\Theta=\begin{pmatrix}\cll&\delta^\dagger\\\delta&\sigma\end{pmatrix}$. Furthermore, suppose that the flow satisfies the assumptions {\bf A(1)}--{\bf A(4)}. Then $j_t$ is a $\ast$-homomorphism for each $t\geq0$.
\ethm

\subsection{Remarks on the assumptions}

We begin with conditions in terms of the semigroups $(j_t^{c,d})_{t\geq0}$ associated with a quantum stochastic cocycle $(j_t)_{t\geq0}$, which will imply the condition given in assumption {\bf A(4)}.

\begin{lmma}\label{jevaabehoy}
Let $\clw$ be a total subset of $k_0$ such that $z \clw \subseteq \clw$ for all $z \in \IC$. Suppose that a quantum stochastic cocycle $(j_t)_{t\geq0}$ satisfies : 
\begin{enumerate}
\item[{\bf A(4)$^\prime$}]
\be\label{Qi}
\|j_t^{c,d}(x)\|_1\leq exp(tM)\|x\|_1
\ee 
\end{enumerate}
 for $ x\in\cla\cap L^1(\tau),$ $c,d \in \clw$ and  where $M\geq0$ depends only on $\|c\|,\|d\|.$ Then the estimate in {\bf A(4)} holds.
\end{lmma}
\begin{proof}
For a partition $0=s_0<s_1<s_2< \ldots <s_n=t$ and for functions of the form $f=\sum_{j}1_{[s_{j-1},s_j)}c_j,$~ $g=\sum_{j}1_{[s_{j-1},s_j)}d_j,$ ($c_j,d_j\in \clw~for~j=1,2,\ldots$), the cocycle property of $j_t(\cdot)$ and (\ref{Qi}) together imply:  $$\|\innerl e(f),j_t(x) e(g) \innerr\|_1\leq exp(tM)\|x\|_1,$$ where $M=max_j(M_j),$ and each $M_j$ depends only on $\|c_j\|$ and $\|d_j\|.$
Let $\Lambda(z):=\innerl ue(\bar{z}f),j_t(x)ve(g)\innerr$. We have $|\innerl ue(\bar{z}f),j_t(x)ve(g)\innerr|\leq exp(tM)\|x\|_1$ for $|z|=1.$ Clearly $\Lambda$ is entire in z since $z\mapsto e(zf)$ is strongly entire and by considering a unit disc centered at zero and applying Cauchy's estimate to this function, we obtain:
\be
(m!)^{\frac{1}{2}}|\innerl uf^{\otimes^m},j_t(x)ve(g)\innerr|\leq \|u^*v\|_\infty m!exp(tM)\|x\|_1,
\ee
for $u,v\in\cla\cap L^2(\tau)$ and $x\in\cla\cap L^1(\tau).$
A similar calculation with the function $\beta(z):=\innerl uf^{\otimes^m},j_t(x)ve(zg)\innerr$ yields:
\be\label{cauchy}
\begin{split}
&|\innerl uf^{\otimes^m},j_t(x)vg^{\otimes^n}\innerr|\leq \|u^*v\|_\infty (m!n!)^{\frac{1}{2}}exp(tM))\|x\|_1,
\end{split}
\ee
which proves that the cocycle $(j_t)_{t\geq0}$ satisfies {\bf A(4)}, if we take\\ $C(m,n,u,v,t,f,g):=\|u^*v\|_\infty (m!n!)^{\frac{1}{2}}exp(tM).$ 
\end{proof}

For certain special types of von-Neumann algebras, the estimate in {\bf A(4)} is automatic, as the next result shows. Note that a type-I von-Neumann algebra $\cla$ with atomic center and acting on a separable Hilbert space, is a direct sum  of the form $\cla \cong  \bigoplus_{m \in T} \cla_m \ot l^\infty(\Gamma_m)$ where $T$ is some index set such that for each $m\in T$, $\cla_m$ is isomorohic to $\clb(H)$ (where $H$ is a separable Hilbert space of finite or infinite dimension) and $\Gamma_m$ is a discrete, at most countable set. There is a natural faithful and semifinite trace $\tau$ on $\cla$, which is a direct sum (over $T$) of the tensor product of the canonical semifinite trace on $\clb(H)$ and the trace on $l^\infty(\Gamma_m)$ coming from the counting measure on $\Gamma_m$. It is clear that $\| x \|_\infty \leq \| x \|_1$ for all $x \in \cla \bigcap L^1(\tau)$.

\begin{lmma}\label{qs flow is homomorphic for type-1}
Let $\cla$ be a type-I von-Neumann algebra with atomic center, acting on a seperable Hilbert space. Suppose that $\tau$ is the natural trace on $\cla$ as discussed above. Then any quantum stochastic flow $(j_t)_{t\geq0}$ on $\cla$ satisfying the assumtions {\bf A(1)}--{\bf A(3)} must also satisfy the estimate in {\bf A(4)}.
\end{lmma}

\begin{proof}
Using $\|x\|_\infty\leq\|x\|_1$ and the contractivity of  $j_t$, we have  for $x\in L^1(\tau)$

\be
\begin{split}
&\sup_{0\leq s\leq t}|\innerl uf^{\ot^m},j_t(x)vg^{\ot^n}\innerr|\\
&\leq\|x\|_\infty\|f^{\ot^m}\|\|g^{\ot^n}\|\|u\|_2\|v\|_2\\
&\leq\|x\|_1\|f^{\ot^m}\|\|g^{\ot^n}\|\|u\|_2\|v\|_2,
\end{split}
\ee which implies that the flow $(j_t)_{t\geq0}$ satisfies {\bf A(4)} with $C(m,n,u,v,t,f,g):=\|f^{\ot^m}\|\|g^{\ot^n}\|\|u\|_2\|v\|_2$.
\end{proof}

Let us now discuss a few special cases to show that the assumptions made above are general enough to cover many interesting and natural examples arising in classical probability.

We state without proof the following proposition (see pp.63, Lemma 3.2.28 of \cite{dgkbs}).

\begin{ppsn}\label{symmetric qds}
Let $\cla$ be a unital C*-algebra with a faithful, semifinite and lower-semicontinuous trace $\tau$ and $(j_t)_{t\geq0}$ be a quantum stochastic flow with the noise space $k_0$ and structure matrix $\Theta=\begin{pmatrix}\cll&\delta^\dagger\\\delta&\sigma\end{pmatrix}$. Suppose furthermore that with respect to the trace $\tau$:
\[
\hspace{2.0cm}T_t(1)=1,~\tau(T_t(x)y)=\tau(xT_t(y))\quad(t\geq0;x,y\in D(\tau)),
\]
where $(T_t)_{t\geq0}$ is the vacuum expectation semigroup associated with the flow $(j_t)_{t\geq0}$. Also suppose that the assumptions {\bf A(1)} and {\bf A(2)} hold. Then {\bf A(3)} follows.
\end{ppsn}

%\begin{proof}
%On $\cla_0,$ we have the cocycle identity given in assumption {\bf A(iv)}. Using assumption {\bf A(ii)}, we may apply the same arguments as given in pages $66-67$ of \cite{dgkbs} to conclude that 

%\be
%\begin{split}
%&\tau(\delta(x)^*\delta(x))=sup_n\tau(e_n\delta(x)^*\delta(x)e_n)\\
%&=2sup_n\|\frac{\delta(x)}{\sqrt{2}}\|_{h\ot k_0}\leq2sup_n\|e_n\|^2_\infty \cle(x)\\
%&<\infty~\\
%&\mbox{(where $\cle$ is the associated Dirichlet form}\\ 
%&\mbox{and $(e_n)_n$ is an approximate identity from $\cla_\tau$.)}
%\end{split}
%\ee
%This proves that $\delta(x)^*\delta(x)\in L^1(\tau),~x\in\cla_0.$ Since on $\cla_0,$ we have the cocycle identity given by {\bf A(iv)} and $\cll(x^*)x$ as well as $x^*\cll(x)$ belongs to $L^1(\tau),$ it follows that $\cll(x^*x)$ belongs to $L^1(\tau).$
%\end{proof}
The following result shows the abundance of classical as well as noncommutative quantum stochastic flows for which the estimate ${\bf A(4)^\prime}$ holds. 

\begin{lmma}\label{asol1}
Let $\cla$ be a C*-algebra equipped with a faithful, semifinite, lower semicontinuous trace $\tau$ and assume  that there is a strongly continuous, $\ast$-automorphic action $\alpha_g$ of  a  Lie group $G$  on $\cla$ which is $\tau$-preserving, that is $\tau(\alpha_g(a))=\tau(a)$.  Extend $\alpha_g$ to a unitary operator $U_g$ on $L^2(\tau)$ and we extend $\alpha$ to $\cla^{\prime\prime}$ as a normal $\ast$-automorphism given by $\alpha_g(x)=U_gxU_g^*.$
Let $(g_t)_{t\geq0}$ be a $G$-valued L\'evy process, defined on some probability space $(\Omega,\clf,P).$ Define $j_t:\cla^{\prime\prime}\rightarrow L^\infty(\Omega,\cla^{\prime\prime})~\subseteq B(L^2(\tau)\ot L^2(\Omega)),$ by $j_t(x)(\omega):=\alpha_{g_t(\omega)}(x)$ and let  $T_t(x)=\IE(\alpha_{g_t(\omega)}(x)).$ 
\begin{enumerate}
\item[(i)] Then for $f,g\in L^2(\Omega),~x\in\cla\cap L^1(\tau),$ we have 
\be
\tau\left(\left|\innerl f,j_t(x)g \innerr\right|\right) 
\leq\|f\|_2\|g\|_2\|x\|_1.
\ee

\item[(ii)] $(T_t)_{t\geq0}$ is a normal quantum dynamical semigroup on $\cla^{\prime\prime}$ and if its restriction on $\cla$ leaves $\cla$ invariant, it is a quantum dynamical semigroup on $\cla$. Furthermore if $g_t$ and $(g_t)^{-1}$ have the same distribution for each $t$, then the semigroup is $\tau$-symmetric.
\end{enumerate} 
\end{lmma}

\begin{proof}
To prove (i), it suffices to show the inequality for positive $x\in\cla.$  For such $x,$ we have 
\be
\begin{split}
&\tau\left(\left|\innerl f,j_t(x)g \innerr\right|\right)\\
&\leq\int_\Omega dP\tau\left(\left|\overline{f}(\omega)g(\omega)j_t(x)(\omega)\right|\right)\\
\end{split}
\ee
from which the result follows.

To prove (ii) we proceed as follows:

from the defining property of L\'evy processes, the semigroup property of $T_t$ follows; while the normality of $T_t$ is a consequence of the fact that $j_t$ is implemented by a normal automorphism of $\cla^{\prime\prime}.$ Moreover, if $g_t$ and $g_t^{-1}$ have the same distribution, we have for $a,b \in L^1(\tau) \bigcap \cla$
\be
\begin{split}
&\tau(T_t(a)b)=\tau[\IE\{\alpha_{g_t(\omega)}(a)b\}]\\
&=\tau[\IE\{\alpha_{g_t(\omega)}(a\alpha_{g_t(\omega)^{-1}}(b))\}]=\IE[\tau\{\alpha_{g_t(\omega)}(a\alpha_{g_t(\omega)^{-1}}(b)) \}]\\
&=\IE[\tau\{(a\alpha_{g_t(\omega)^{-1}}(b))\}]=\tau[a\IE\{\alpha_{g_t(\omega)}(b)\}]=\tau(aT_t(b)).\\
\end{split}
\ee
\end{proof}

Note that if the L\'evy process in Lemma \ref{asol1} has the chaotic decomposition property, for example if it is a Brownian motion or a 	Poisson process, then we can realize the flow $j_t$ as a quantum stochastic flow in a suitable Fock space. To see a concrete example, assume furthermore that $G$ be a second countable, compact Lie group of dimension $k$, acting smoothly on  $\cla$.  Denote by $\cla^\infty$ the dense $\ast$-subalgebra of $\cla$ generated by elements $x$ such that $g\rightarrow\alpha_g(x)$ is norm-smooth, where $g\rightarrow\alpha_g$ is the group action. Let $\{\chi_\ell\}_{\ell=1}^k$ be a basis for the Lie algebra of $G$ and let $g_t$ be the $G$-valued standard Brownian motion. Then $g_t$ and $g_t^{-1}$ have the same distribution, so the vacuum expectation semigroup of $j_t$ is symmetric. Moreover taking $\cla_0=\cla^\infty$,  the assumptions {\bf A(1)}, {\bf A(2)}, {\bf A(3)}  are easily seen to hold. By Lemma \ref{asol1} ${\bf A(4)^\prime}$  and  hence {\bf A(4)} holds too.  
The flow $j_t$ can be viewed as a possibly noncommutative Brownian motion on $\cla$.

\begin{rmrk}
Consider a typical diffusion process in $\IR$ whose generator is of the form:
$$\cll=\frac{1}{2}\frac{d}{dx}a^2(x)\frac{d}{dx}+b(x)\frac{d}{dx}.$$
The coefficients $a$ and $b$ are assumed to be smooth and $a$ is assumed to be non-vanishing everywhere. By a change of variable $x\mapsto \phi(x)$, where $\phi(x)=\int_0^x ds e^{\int_0^s\ dt \frac{2b(t)}{a^2(t)}}+C$ (where C is a constant), the generator $\cll$ can be made symmetric with respect to the trace $\tau^\prime$ given by $\tau^\prime(f)=\int f(x)\phi^\prime(x)dx.$ Thus the assumption of symmetry can accommodate the semigroup corresponding to an arbitrary one-dimensional diffusion with smooth coefficients. 
\end{rmrk}

\begin{rmrk}
On the other hand, for some of the most common Markov processes arising in classical probability for which the corresponding quantum stochastic flow (see Remark \ref{wiener ito segal}) satisfies assumptions {\bf A(1)}--{\bf A(3)}, the vacuum expectation semigroup $(T_t)_{t\geq0}$ associated with the flow cannot be made symmetric even by a change of measure on the underlying function algebra. For example, consider the flow associated to the standard Poisson process on $\IZ_+,$ realized on the commutative von-Neumann algebra $l^\infty(\IZ_+),$ equipped with the trace given by the counting measure. Here the generator $\cll_2$ is the bounded operator $l-I,$ where l denotes the unilateral shift operator on $l^2(\IZ_+).$ It is straightforward to see that for $\phi~\in~l^\infty(\IZ_+),~\phi\geq0,$ $\tau((l-I)(\phi))\leq0.$ However, there does not exist any faithful positive trace for which $\cll$ is symmetric, as seen from the following argument:

If $\cll$ is symmetric with respect to some measure $\mu,$ say given by a sequence $\{p_i=\mu(\{i\})\}$ of non-negative numbers, then the symmetry condition will imply (since $1\in D(\cll)$) that $\sum_{i\geq0}(\phi(i+1)-\phi(i))p_i=0$ for all $\phi\in l^\infty(\IZ_+)$. Hence we have $p_i=0$ for all i. 
\end{rmrk}

\subsection{Proof of the main theorem}

Throughout this subsection let us fix a quantum stochastic flow $(j_t)_{t\geq0}$ with the noise space $k_0$ and structure matrix $\Theta=\begin{pmatrix}\cll&\delta^\dagger\\\delta&\sigma\end{pmatrix}$, satisfying the assumptions {\bf A(1)}--{\bf A(4)}.

We need a few preparatory lemmas to prove Theorem \ref{main theorem}. We begin with the following observations:

\begin{rmrk}\label{interchange}
Let $E$ be a Banach space, $F:\IR\rightarrow E$ be a strongly measurable map and let $\mu$ be a measure on $\IR$. Suppose that the integrals $\int_{\IR} d\mu(t) F(t)$ and $\int_{\IR} d\mu(t) T(F(t))$ exist, where $T$ is a closed densely defined operator in $E.$ Then $\int_{\IR} d\mu(t) F(t)\in D(T)$ and $$T(\int_{\IR} d\mu(t) F(t))=\int_{\IR} d\mu(t) T(F(t)).$$
\end{rmrk}

\begin{rmrk}\label{analytic semigroup n real part}
We observe that because of analyticity in assumption {\bf A(3)}, the real part of the operator $(-\cll_2)$ exists as an unbounded, densely defined and non-negative operator (see pages 322 and 336 of \cite{kato}). 
\end{rmrk}

Throughout this subsection, we will be working in the projective tensor product $h\otp h$. We fix the following notations:
\[
 \hat{\cll}:=\cll_2\otp I+I\otp\cll_2;
\] 

\[
L:=(-2Re(\cll_2))^{\frac{1}{2}};
\] 

\[
L\otp L:=(L\otp I)\circ(I\otp L)=(I\otp L)\circ(L\otp I);
\] 

\[
\clf:=\cla_0\ota\cla_0;
\] 

\[
\cly:=\{{(\lambda-\hat{\cll})^{-1}(x\ot y)|~x,y\in\cla_0}\}\quad(\lambda>0).
\]

The next two lemmas set the stage for the application of Lemma \ref{dg} to our problem, leading to the proof of Theorem \ref{main theorem}.

\begin{lmma} \label{ineq1}
For $x\in\cla_0$ with $x\neq0$ and $\lambda>0$, we have the following strict inequality: $$\int_0^\infty e^{-\lambda t} dt \|L(T_t(x))\|^2<\|x\|^2.$$ 
\end{lmma}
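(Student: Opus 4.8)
The plan is to reduce the claimed bound to an elementary statement about the scalar function $g(t):=\|T_t x\|^2$ and its time-derivative. The starting observation is that $C$ is, by construction, the square root of the real part of $-2\cll_2$: for $x\in\cla_0\subseteq D(\cll_2)$ the vector $T_t x$ stays in $D(\cll_2)$ for every $t\ge 0$ (by the invariance in {\bf A(ii)} together with the smoothing property of the analytic semigroup granted by {\bf A(i)}), and on $D(\cll_2)\subseteq D(C)$ one has the form identity
\[ \|C u\|^2 = -2\,\mathrm{Re}\,\langle \cll_2 u, u\rangle . \]
Applying this with $u=T_t x$ and differentiating $g$ gives
\[ g'(t)=\frac{d}{dt}\langle T_t x, T_t x\rangle = 2\,\mathrm{Re}\,\langle \cll_2 T_t x, T_t x\rangle = -\|C(T_t x)\|^2 . \]
In particular $g$ is non-increasing, consistent with the contractivity of $T_t$.

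Next I would multiply by $e^{-\lambda t}$ (with $\lambda>0$) and integrate by parts over $[0,\infty)$. Since $0\le g(t)\le\|x\|^2$, the boundary contribution at infinity vanishes, while at $0$ it contributes $g(0)=\|x\|^2$; monotonicity of $g$ makes $-g'$ integrable, so every integral in sight converges. This yields
\[ \int_0^\infty e^{-\lambda t}\|C(T_t x)\|^2\,dt = -\int_0^\infty e^{-\lambda t} g'(t)\,dt = \|x\|^2 - \lambda\int_0^\infty e^{-\lambda t}\|T_t x\|^2\,dt . \]
Finally, since $x\neq 0$ and $T_0 x=x$, strong continuity of $(T_t)$ forces $\|T_t x\|>0$ on a neighbourhood of $0$, so the remaining integral is strictly positive; as $\lambda>0$ this produces the strict inequality $\int_0^\infty e^{-\lambda t}\|C(T_t x)\|^2\,dt<\|x\|^2$, which is the assertion.

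I expect the only genuinely delicate point to be the justification of the form identity $\|Cu\|^2=-2\,\mathrm{Re}\langle\cll_2 u,u\rangle$ on the operator domain $D(\cll_2)$. This is precisely where the analyticity hypothesis {\bf A(i)} enters: it guarantees (via the results of Kato cited in the remark recording that $\mathrm{Re}(-2\cll_2)$ exists as a non-negative self-adjoint operator) that $-2\cll_2$ is sectorial, that its real part is a well-defined non-negative operator with $D(\cll_2)$ lying in its form domain, and hence that $C=(-2\,\mathrm{Re}(\cll_2))^{1/2}$ and the displayed identity are meaningful. The remaining analytic steps — differentiability of $t\mapsto T_t x$ into $D(\cll_2)$, the legitimacy of differentiating under the norm, and the boundary behaviour in the integration by parts — follow routinely from analyticity of the semigroup and from the monotonicity and boundedness of $g$, so I would treat them briefly.
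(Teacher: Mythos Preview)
Your argument is correct and coincides with the paper's own proof: both differentiate $t\mapsto\|T_t x\|^2$ to obtain $-\|C(T_t x)\|^2$, integrate by parts against $e^{-\lambda t}$, and then invoke strong continuity together with $x\neq 0$ to force strict positivity of $\lambda\int_0^\infty e^{-\lambda t}\|T_t x\|^2\,dt$. The only cosmetic difference is that the paper phrases the last step as a contradiction (if the integral vanished then $T_t x=0$ for a.e.\ $t$, hence $x=0$), whereas you argue directly from continuity at $t=0$; your additional commentary on why the form identity $\|Cu\|^2=-2\,\mathrm{Re}\,\langle \cll_2 u,u\rangle$ holds on $D(\cll_2)$ is a useful elaboration of a point the paper leaves implicit.
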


\begin{proof} For x in $\cla_0$,
\be
\frac{d}{dt}\|T_t(x)\|^2=\innerl \cll_2(T_t(x)),T_t(x)\innerr+\innerl T_t(x),\cll_2(T_t(x))\innerr=-\|L(T_t(x))\|^2.
\ee
For $\lambda>0$, we have
\be
\begin{split}
\int_0^\infty e^{-\lambda t} \|L(T_t(x))\|^2dt=&-\int_0^\infty e^{-\lambda t} \frac{d}{dt}\|T_t(x)\|^2dt\\=&\left\{\|x\|^2-\lambda\int_0^\infty e^{-\lambda t} \|T_t(x)\|^2dt\right\},
\end{split}
\ee
which implies that 
\[
\int_0^\infty e^{-\lambda t} \|L(T_t(x))\|^2 dt \leq\|x\|^2\quad(x\in\cla_0).
\]

If $\int_0^\infty dt~ e^{-\lambda t} \|T_t(x)\|^2=0$ for some $\lambda>0$, we have $\|T_t(x)\|=0$ for all t. By the continuity of $\|T_t(x)\|$ as a function of t, this implies that $x=0$, which leads to a contradiction. 
\end{proof}

\begin{lmma}\label{extension of C}
$\|(L\otp L)(X)\|_\gamma\leq\|(\lambda-\hat{\cll})(X)\|_\gamma$ for all $X$ in $D(\hat{\cll})$ and $\lambda>0$. Furthermore we have strict inequality if $X$ is in $\cly$.
\end{lmma}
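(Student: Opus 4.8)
The plan is to represent $X$ through the resolvent of the product semigroup and then pull $C\otp C$ through the resulting Laplace integral, reducing everything to the one-variable estimate of Lemma \ref{ineq1}. Fix $\lambda>0$ and set $Y=(\lambda-\hat{\cll})(X)$. By Lemma \ref{semigroup} the semigroup generated by $\hat{\cll}$ is $T_t\otp T_t$, so
$$X=(\lambda-\hat{\cll})^{-1}(Y)=\int_0^\infty e^{-\lambda t}(T_t\otp T_t)(Y)\,dt.$$
Since $C\otp C=(C\otp 1)\circ(1\otp C)$ is closed, I would invoke Remark \ref{interchange} to move it inside the integral, using that on elementary tensors $(C\otp C)(T_t\otp T_t)$ acts as $CT_t\ot CT_t$:
$$(C\otp C)(X)=\int_0^\infty e^{-\lambda t}(CT_t\otp CT_t)(Y)\,dt.$$

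For the strict case $X\in\cly$ we have $Y=x\ot y$ with $x,y\in\cla_0$ nonzero. Splitting $e^{-\lambda t}=e^{-\lambda t/2}e^{-\lambda t/2}$ and using subadditivity of the projective norm under the integral together with $\|u\ot v\|_\gamma=\|u\|\,\|v\|$ for elementary tensors, followed by Cauchy--Schwarz in $t$,
$$\|(C\otp C)(X)\|_\gamma\le\int_0^\infty e^{-\lambda t}\|CT_t(x)\|\,\|CT_t(y)\|\,dt\le\Big(\int_0^\infty e^{-\lambda t}\|CT_t(x)\|^2dt\Big)^{1/2}\Big(\int_0^\infty e^{-\lambda t}\|CT_t(y)\|^2dt\Big)^{1/2}.$$
Lemma \ref{ineq1} makes each factor strictly below $\|x\|$ and $\|y\|$, so $\|(C\otp C)(X)\|_\gamma<\|x\|\,\|y\|=\|x\ot y\|_\gamma=\|(\lambda-\hat{\cll})(X)\|_\gamma$, which is the asserted strict inequality.

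For a general $X\in D(\hat{\cll})$ I would run the same chain on an arbitrary series representation of the fixed element $Y$. Writing $Y=\sum_i a_i\ot b_i$ with $a_i,b_i\in\cla_0$ and $\sum_i\|a_i\|\|b_i\|<\infty$ (such representations compute $\|Y\|_\gamma$ since $\cla_0$ is dense in $h$), the same manipulation together with Tonelli gives
$$\|(C\otp C)(X)\|_\gamma\le\sum_i\Big(\int_0^\infty e^{-\lambda t}\|CT_t(a_i)\|^2dt\Big)^{1/2}\Big(\int_0^\infty e^{-\lambda t}\|CT_t(b_i)\|^2dt\Big)^{1/2}<\sum_i\|a_i\|\,\|b_i\|.$$
As the left side depends only on $X$, while this bound holds for \emph{every} representation of $Y$, taking the infimum over all such representations yields $\|(C\otp C)(X)\|_\gamma\le\|Y\|_\gamma=\|(\lambda-\hat{\cll})(X)\|_\gamma$, the strictness being dissolved by the passage to the infimum.

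The main obstacle I anticipate is justifying the two interchanges of $C\otp C$ (a closed, generally unbounded operator on $h\otp h$) with the Bochner integral and with the infinite sum, together with the measurability and integrability hypotheses these require. In particular, verifying $\int_0^\infty e^{-\lambda t}\|(CT_t\otp CT_t)(Y)\|_\gamma\,dt<\infty$, so that Remark \ref{interchange} legitimately applies, is precisely the crux; this finiteness is itself supplied by the Cauchy--Schwarz estimate above, so the integrability and the norm bound must be established in tandem rather than in sequence. A secondary technical point is confirming that the infimum of $\sum_i\|a_i\|\|b_i\|$ restricted to representations with factors in $\cla_0$ still recovers $\|Y\|_\gamma$, which follows from density of $\cla_0$ in $h$ and continuity of the projective cross-norm.
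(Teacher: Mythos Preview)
Your argument is essentially the paper's: represent the resolvent as the Laplace transform of $T_t\otp T_t$, push $C\otp C$ through via Remark~\ref{interchange}, then apply Cauchy--Schwarz in $t$ and Lemma~\ref{ineq1}. The strict-inequality case for $X\in\cly$ is handled identically.

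The one organizational difference concerns the general inequality. The paper first runs the entire computation on $\clf=\cla_0\ota\cla_0$, where every sum is \emph{finite}; this yields $\|(C\otp C)(\lambda-\hat{\cll})^{-1}(X)\|_\gamma\le\|X\|_\gamma$ for $X\in\clf$, i.e.\ $(C\otp C)(\lambda-\hat{\cll})^{-1}$ is a contraction on a dense subspace, and then extends to all of $h\otp h$ by continuity. This order sidesteps precisely the two obstacles you flag---commuting the closed unbounded operator $C\otp C$ with an infinite series and with the Bochner integral for non-elementary $Y$---because on $\clf$ the sums are finite and Remark~\ref{interchange} applies without further work. Your route through infinite series representations of $Y$ is not incorrect, but the justifications you anticipate are genuinely required, and the cleanest way to supply them is exactly to retreat to finite sums first and pass to the closure afterwards; so the paper's order of operations is the natural way to complete your sketch.
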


\begin{proof}

%\be
%\begin{split}
%&\int_0^\infty e^{-\lambda t} (T_t\otimes_\gamma T_t)(X)=\lim_{m_\Delta}\sum_{k=1}^{m_{\Delta}}e^{-\lambda %t_k}(T_{t_k}\otimes_\gamma T_{t_k})(X)\Delta t_k.\\
%&(C\otp1)(\sum_{k=1}^{m_{\Delta}}e^{-\lambda t_k}(T_{t_k}\otimes_\gamma T_{t_k})(X)\Delta t_k)\\
%&=\sum_{k=1}^{m_\Delta}\Delta t_k e^{-\lambda t_k}(T_{t_k}\otp T_{t_k})(C\otp1)(X)\\
%&\mbox{which goes to}~\int_0^\infty e^{-\lambda t}(T_t\otp T_t)(C\otp1)(X) \mbox{in} \|\|_\gamma\\
%&\mbox{Thus by closability of}~C\otp1,\\
%&\mbox{we see that}~(\lambda-A)^{-1}(X)\in Dom(C\otp1)\\
%&\mbox{and}~(C\otp1)(\lambda-A)^{-1}(X)=\int_0^\infty e^{-\lambda t} (T_t\otp T_t)(C\otp1)(X)
%\end{split}
%\ee

%Now consider $1\otp C.$ Notice that $$(C\otp1)(\sum_{k=1}^{m_\Delta}\Delta t_k e^{-\lambda t_k}(T_{t_k}\otp T_{t_k})(X))\in Ran(C)\ota Dom(C)\subset Dom(1\otp C).$$

%Thus
%\be
%\begin{split}
%&(1\otp C)\circ(C\otp 1)(\sum_{k=1}^{m_\Delta}\Delta t_k e^{-\lambda t_k}(T_{t_k}\otp T_{t_k})(X))\\
%&=\sum_{k=1}^{m_\Delta}\Delta t_k e^{-\lambda t_k} (C\otp C)(T_{t_k}\otp T_{t_k})(X)\\
%&\mbox{which goes to}~\int_0^\infty e^{-\lambda t}(T_t\otp T_t)(C\otp C)(X)\\
%&\mbox{Thus}~(1\otp C)[(C\otp 1)(\sum_{k=1}^{m_\Delta}\Delta t_k e^{-\lambda t_k}(T_{t_k}\otp T_{t_k})(X))]\\
%&\mbox{converges and so does}~(C\otp1)(\sum_{k=1}^{m_\Delta}\Delta t_k e^{-\lambda t_k}(T_{t_k}\otp T_{t_k})(X))\\
%&\mbox{So by closability of}~(1\otp C),\mbox{we see that}\\
%&(C\otp1)(\lambda-A)^{-1}(X)\in Dom(1\otp C)\mbox{and}\\
%&(C\otp C)(\lambda-A)^{-1}(X)=\int_0^\infty e^{-\lambda t} (T_t\otp T_t)(C\otp C)(X)\\
%\end{split}
%\ee

Let $X\in\clf$ such that $X=\sum_{i=1}^k x_i\ot y_i$. It is obvious that\\ $( I\otp L)(\clf)\subset D((L\otp I))$. So using Lemma \ref{ineq1} we have:
\be \label{equa10}
\begin{split}
\hskip-10pt&\int_0^\infty dt ~e^{-\lambda t}\|\{(L\otp L)(T_t\otp T_t)\}(X)\|_\gamma\\
&=\int_0^\infty dt~e^{-\lambda t}\|\sum_{i=1}^k L(T_t(x_i))\ot L(T_t(y_i))\|_\gamma\\
&\leq \sum_{i=1}^k(\int_0^\infty dt~e^{-\lambda t}\|L(T_t(x_i))\|^2)^{\frac{1}{2}}(\int_0^\infty dt~e^{-\lambda t}\|L(T_t(y_i))\|^2)^{\frac{1}{2}}	
<\sum_{i=1}^k\|x_i\|\|y_i\|.
\end{split}
\ee
Equation (\ref{equa10}) and Remark \ref{interchange} together yield:
$$\|(L\otp L)((\lambda-\hat{\cll})^{-1}(X))\|_\gamma\leq\|X\|_\gamma.$$
Thus $(L\otp L)\circ(\lambda-\hat{\cll})^{-1}$ is a contractive operator in $h\otp h$. As a consequence, $L\otp L$ extends to $D(\hat{\cll})$ by (i) of {\bf A(3)} and Corollary \ref{projective core}. This gives us the required inequality. 

Let $X=x\otimes y,$ where $x,y\in\cla_0$. The above equations give: 
\be
\begin{split}
&\|(L\otp L)((\lambda-\hat{\cll})^{-1}(x\ot y))\|_\gamma <\|x\|\|y\|=\|x\ot y\|_\gamma\\
&\hskip-15pt\mbox{or}
~~\|(L\otp L)(Y)\|_\gamma <\|(\lambda-\hat{\cll})(Y)\|_\gamma
\hskip10pt\mbox{for $Y\in\cly$,}
\end{split}
\ee
which proves the second claim.
\end{proof}

\begin{lmma}\label{right kind of delta}
For $x\in\cla_0$ we have:
\begin{enumerate}

\item[(i)]
\[
\theta^i_0(x)\in h(=L^2(\tau))\quad(i\geq1).
\]

\item[(ii)]
The series $\sum_{i\geq1}\theta^i_0(x)\ot |e_i\rgl$ is convergent in the norm of $h\ot k_0$. Thus we have:
\[
\delta(x)=\sum_{i\geq1}\theta^i_0(x)\ot |e_i\rgl\in h\ot k_0.
\]

\item[(iii)]
There exists a densely defined positive operator $C$ such that $D(\cll_2)$ is a core for $C$ and 
\[
Cu=Lu\quad(u\in D(\cll_2)); 
\]

\[
Re\lgl u, (-\cll_2)v\rgl=\lgl Cu,Cv\rgl\quad(u,v\in D(\cll_2)). 
\]

For each $i\geq1$, $\theta^i_0$ extends to a linear map from $D(C)$ to $h$.

\item[(iv)]
There exists a linear map $B:D(\hat{\cll})(\subseteq h\otp h)\rightarrow h\otp h$, satisfying
\begin{enumerate}
\item[(a)]
\[
\|B(X)\|_\gamma\leq\|(\lambda-\hat{\cll})(X)\|_\gamma\quad(X\in D(\hat{\cll})),
\]
the inequality being strict if $X\in\cly$.
\item[(b)]
\[
B(x\ot y)=\sum_{i\geq1}\theta^i_0(x)\ot \theta^i_0(y)\quad(x,y\in\cla_0).
\]
\end{enumerate}

\end{enumerate}

\end{lmma}

\begin{proof}
For $x\in\cla_0$, we have 
\[
\tau(\theta^i_0(x)^*\theta^i_0(x))\leq\tau(\sum_{i\geq1}\theta^i_0(x)^*\theta^i_0(x))=\tau(\delta(x)^*\delta(x)).
\]
The last identity in {\bf A(1)} implies that:
\[
\tau(\delta(x)^*\delta(x))=\tau(\cll(x^*x))-\tau(\cll(x^*)x)-\tau(x^*\cll(x))\quad(x\in\cla_0).
\]
Combining this with (ii) of {\bf A(3)} we have:
\[
\tau(\delta(x)^*\delta(x))\leq -\tau(\cll(x^*)x)-\tau(x^*\cll(x))<\infty\quad(x\in\cla_0),
\]
which proves (i) and (ii). 

To prove (iii) note that the last identity in {\bf A(2)} implies the following:
\be\label{asolmaps}\begin{split} 
\|\theta^i_0(x)\|_2^2\leq&\sum_{j=1}^\infty\|\theta^j_0(x)\|^2_2\leq\tau(\delta(x)^*\delta(x))\\
&\leq\tau(\left(-\cll\right)(x^*)x)+\tau(x^*\left(-\cll\right)(x))\leq\|L(x)\|^2_2\leq\|(L+\epsilon)(x)\|^2_2,
\end{split}
\ee 
for all x in $\cla_0,~\epsilon>0$ and $i\geq1$, as $L$ is a non-negative operator. By Remark \ref{analytic semigroup n real part}, we have
\[
Re\lgl u,(-\cll_2)v\rgl=\lgl u, L^2v\rgl= \lgl Lu,Lv\rgl\quad(u,v\in D(\cll_2)).
\]
Thus by Theorem 1.27 of page 318 in \cite{kato}, the densely defined, non-negative sesquilinear form $q(u,v):=\lgl u,L^2v\rgl$, $u,v\in D(\cll_2)$ is closable. Let $T$ be the positive operator associated with the form $q$, as obtained in Theorem 2.23 of page 331 in \cite{kato}. Set $C:=T^{\frac{1}{2}}$. Note that $Tu=L^2u$ for all $u\in D(\cll_2)$ and $D(\cll_2)$ is a core for $C$. Thus from equation \eqref{asolmaps}, we have

\be\label{asolmap}\begin{split} 
\|\theta^i_0(x)\|_2^2\leq&\sum_{i=1}^\infty\|\theta^i_0(x)\|^2_2\leq\tau(\delta(x)^*\delta(x))\\
&\leq\tau(\left(-\cll\right)(x^*)x)+\tau(x^*\left(-\cll\right)(x))\leq\|C(x)\|^2_2\leq\|(C+\epsilon)(x)\|^2_2,
\end{split}
\ee 
for $x\in\cla_0$ and $i\geq1$, from which (iii) follows.

Finally we prove (iv):

Set $C\otp C:=(I\otp C)\circ(C\otp I)=(C\otp I)\circ(I\otp C)$. Define a map $B$ belonging to $Lin(D(C)\ota D(C),h\otp h)$ by: $$B(x\ot y)=\sum_{i\geq1}\theta^i_0(x)\ot\theta^i_0(y)\quad(x,y\in D(C),i\geq1),$$ and extending linearly. This operator is well-defined because 
for $\epsilon>0$, 
\be
\begin{split}
&\sum_{i\geq1}\|\theta^i_0(x)\|\|\theta^i_0(y)\|\leq\{(\sum_{i\geq1}\|\theta^i_0(x)\|^2)(\sum_{i\geq1}\|\theta^i_0(y)\|^2)\}^\frac{1}{2}\\
&\leq\|(C+\epsilon)x\|\|(C+\epsilon)y\|
<\infty.
\end{split}
\ee

It follows that  

\be 
\|B\{(C+\epsilon)^{-1}\otp(C+\epsilon)^{-1}\}(x\ot y)\|_\gamma\leq\|x\ot y\|_\gamma,
\ee

which implies that $B\circ\{(C+\epsilon)^{-1}\otp(C+\epsilon)^{-1}\}$ extends to a contraction on $h\otp h$. Thus we have
\be
\|B(X)\|_\gamma\leq\|\{(C+\epsilon)\otp(C+\epsilon)\}(X)\|_\gamma
\ee
for all $X\in D(C)\ota D(C)$ and $\epsilon>0$. Letting $\epsilon\rightarrow0$ we get 
\be
\|B(X)\|_\gamma\leq\|(C\otp C)(X)\|_\gamma
\ee
for all X in $D(C)\ota D(C).$ Note that the conclusion of Lemma \ref{extension of C} holds with $L$ replaced by $C$. As $C\otp C$ extends to $D(\hat{\cll})$, we can also extend $B$ to $D(\hat{\cll}).$ So we have:
\be\label{bittu}
\|B(X)\|\leq\|(C\otp C)(X)\|_\gamma\leq\|(\lambda-\hat{\cll})(X)\|_\gamma~\mbox{for all $X\in D(\hat{\cll})$.}
\ee
Since $\cly\subseteq D(\hat{\cll})$, we have:
\be\label{B ineq}
\|B(Y)\|_\gamma\leq\|(C\otp C)(Y)\|_\gamma<\|(\lambda-\hat{\cll})(Y)\|_\gamma\quad(Y\in\cly).
\ee
This proves (iv).
\end{proof}

We are now in a position to prove Theorem \ref{main theorem}.\\

{\bf Proof of Theorem \ref{main theorem}:}

Throughout the proof we adopt Einstein's summation convention for convenience. 

For $f,g$ in $\clw,$ the flow equation \eqref{fund} leads to : 
\be
\innerl j_t(x)ue(f),ve(g)\innerr=\innerl xue(f),ve(g)\innerr+\int_0^t ds \innerl j_s(\theta^\mu_\nu(x))ue(f),ve(g)\innerr g^\mu(s)f_\nu(s).
\ee
Using the quantum It\^o formula we get:
\be   \label{ito}
\begin{split}
&\hskip-15pt\innerl j_t(x)ue(f),j_t(y)ve(g)\innerr\\=&\innerl xue(f),yve(g)\innerr
+\int_0^t ds[\innerl j_s(\theta^\mu_\nu(x))ue(f),j_s(y)ve(g)\innerr g^\mu(s)f_\nu(s)\\
&+\innerl j_s(x)ue(f),j_s(\theta^\mu_\nu(y))ve(g)\innerr f^\mu(s)g_\nu(s)
\\&+\innerl~j_s(\theta^i_\mu(x))ue(f),j_s(\theta^i_\nu(y))ve(g)\innerr~f_\mu(s)g_\nu(s)].
\end{split}
\ee

For fixed $u,v$ in $\cla\cap h,$ $f,g$ in $\clw,$ we define for each $t\geq0,$\\ $\phi_t:\cla_0\times\cla_0\rightarrow\IC$ by
\be
\phi_t(x,y):=\innerl~j_t(x)ue(f),j_t(y)ve(g)\innerr~-\innerl~j_t(y^*x)ue(f),ve(g)\innerr\quad(x,y\in\cla_0).
\ee
Using (\ref{fund}), (\ref{ito}) and \eqref{structure relations bdd}  we have:\\
\be       \label{phi ito}
\begin{split}
&\phi_t(x,y)=\int_0^t ds [\phi_s(\theta^0_0(x),y)+\phi_s(x,\theta^0_0(y))+\phi_s(\theta^i_0(x),\theta^i_0(y))\\
&+g^i(s)\phi_s(\theta^i_0(x),y)+g_i(s)\phi_s(x,\theta^0_i(y))+f_i(s)\phi_s(\theta^0_i(x),y)\\
&+f^i(s)\phi_s(x,\theta^i_0(y))+g^i(s)f_j(s)\phi_s(\theta^i_j(x),y)+g_i(s)f^j(s)\phi_s(x,\theta^j_i(y))\\
&+g^i(s)\phi_s(\theta^m_0(x),\theta^m_i(y))+f_i(s)\phi_s(\theta^m_i(x),\theta^m_0(y))+f_j(s)g_i(s)\phi_s(\theta^m_j(x),
\theta^m_i(y))].
\end{split}
\ee
Next we follow the ideas indicated in the pages 178-181 in \cite{dgkbs} and define for m,n in $\IN\cup0,$
\be 
\begin{split}
&\phi^{m,n}_t(x,y):=\frac{1}{(m!n!)^{\frac{1}{2}}}[\innerl~j_t(x)uf^{\otimes^m},j_t(y)vg^{\otimes^n}\innerr~-\innerl~j_t(y^*x)uf^{\otimes^m},vg^{\otimes^n}\innerr~]\\
&=\frac{1}{m!n!}\frac{\partial^m}{\partial\rho^m}\frac{\partial^n}{\partial\eta^n}\{\innerl j_t(x)ue(\rho f),j_t(y)ve(\eta g)  \innerr-\innerl j_t(y^\ast x)ue(\rho f), ve(\eta g) \innerr\}|_{\rho,\eta=0}.
\end{split}
\ee
Differentiating (\ref{phi ito}) with respect to $\rho$ and $\eta$ and setting $\rho=\eta=0,$ we get a recursive integral relation amongst $\phi^{m,n}_t(x,y)$ as follows:
\be \label{recursion}
\begin{split}
&\phi_t^{m,n}(x,y)=\int_0^t ds [\phi_s^{m,n}(\theta^0_0(x),y)+\phi_s^{m,n}(x,\theta^0_0(y))+\phi_s^{m,n}(\theta^i_0(x),\theta^i_0(y))\\
&+g^i(s)\phi_s^{m,n-1}(\theta^i_0(x),y)+g_i(s)\phi_s^{m,n-1}(x,\theta^0_i(y))\\
&+f_i(s)\phi_s^{m-1,n}(\theta^0_i(x),y)+f^i(s)\phi_s^{m-1,n}(x,\theta^i_0(y))\\
&+g^i(s)f_j(s)\phi_s^{m-1,n-1}(\theta^i_j(x),y)+g_i(s)f^j(s)\phi_s^{m-1,n-1}(x,\theta^j_i(y))\\
&+g^i(s)\phi_s^{m,n-1}(\theta^k_0(x),\theta^k_i(y))+f_i(s)\phi_s^{m-1,n}(\theta^k_i(x),\theta^k_0(y))\\
&+f_j(s)g_i(s)\phi_s^{m-1,n-1}(\theta^k_j(x),\theta^k_i(y))],
\end{split}
\ee
where $\phi_t^{-1,n}(x,y):=\phi_t^{m,-1}(x,y):=0$ for all $m,n$ and $x,y\in \cla_0$.
We set in (\ref{recursion}) $m=n=0$ to get 
\begin{equation*} 
\phi_t^{0,0}(x,y)=\int_0^t ds \{ \phi_s^{0,0}(\theta^0_0(x),y)+\phi^{0,0}_s(x,\theta^0_0(y))+\phi_s^{0,0}(\theta^i_0(x),\theta^i_0(y)) \}.
\end{equation*}

Thus we consider an equation of the form
\be\label{first step}
\phi_t^{m,n}(x,y)=\int_0^t ds [\phi_s^{m,n}(\theta^0_0(x),y)+\phi_s^{m,n}(x,\theta^0_0(y))+\phi_s^{m,n}(\theta^i_0(x),\theta^i_0(y))],
\ee
for $x,y\in\cla_0,m,n\geq0$. Our aim is to show that the hypothesis of Theorem \ref{main theorem} and equation (\ref{first step}) imply that $\phi_t^{m,n}(x,y)=0$. Having achieved this, we can embark on our induction hypothesis as $$\phi_t^{k,l}(x,y)=0~\mbox{for $k+l\leq m+n-1$}.$$ 
Under the induction hypothesis, equation (\ref{recursion}) reduces to an equation of the form:
\be   \label{actual equa}
\phi_t^{m,n}(x,y)=\int_0^t ds [\phi_s^{m,n}(\theta^0_0(x),y)+\phi_s^{m,n}(x,\theta^0_0(y))+\phi_s^{m,n}(\theta^i_0(x),\theta^i_0(y))]
\ee
for $x,y\in\cla_0,$ which is an equation similar to (\ref{first step}), leading to $\phi_t^{m,n}(x,y)=0$ as earlier and this will complete the induction process. Thus it only remains to show that the assumptions of this theorem lead to a trivial solution to an equation of the type (\ref{first step}). Omitting the indices m,n, define a map $\psi_t$ belonging to $Lin(\cla_0\ota\cla_0,\IC)$ by: $$\psi_t(x\ot y):=\phi_t^{m,n}(x,y)\quad(x,y\in\cla_0,m,n\geq0)$$ and extending linearly. Thus equation (\ref{first step}) leads to: 
\be\label{tittu}
\psi_t(X)=\int_0^t ds [\psi_s((\theta^0_0\otimes1+1\otimes\theta^0_0+\textit{}(\theta^i_0\ota\theta^i_0))(X))]\quad(X\in\clf).
\ee
The complete positivity of the map $j_t$ implies that 
\be
\innerl j_t(x)\xi,j_t(x)\xi \innerr\leq\innerl j_t(x^*x)\xi,\xi \innerr
\ee
for $\xi\in h\ot\Gamma$ and hence by {\bf A(4)}, we get that
\be \label{du number term}
\begin{split}
\hskip-5pt|\innerl j_t(x)uf^{\otimes^m},j_t(y)vg^{\otimes^n}\innerr\!|
&\!\leq\!(|\innerl j_t(x^*x)uf^{\otimes^m},uf^{\otimes^m}\innerr\innerl j_t(y^*y)vg^{\otimes^n},vg^{\otimes^n}\innerr|)^{\frac{1}{2}}\\
&\!\leq\!(C(m,m,u,u,t,f,f)C(n,n,v,v,t,g,g))^{\frac{1}{2}}\|x\|_2\|y\|_2\\
&=O(e^{\beta t})\|x\|_2\|y\|_2.
\end{split}
\ee
The assumption {\bf A(4)} and (\ref{du number term}) together yield: 
\be\label{dct}
|\psi_t(X)|\leq O(e^{\beta t})\|X\|_\gamma,~\mbox{for $X\in\clf,$}
\ee 
which proves (by virtue of denseness of $\clf$ in $h\otp h$) that $\psi_t$ extends as a bounded map from $h\otp h$ to $\IC.$ If we let $G=\hat{\cll}+B,$ then for $X\in\clf,$ the equation (\ref{tittu}) becomes: $$\psi_t(X)=\int_0^t\psi_s(G(X)) ds.$$ Equation (\ref{dct}) leads to $\int_0^\infty dt e^{-\lambda t}|\psi_t(X)|<\infty$ for $\lambda\geq\beta$, which implies that 
$$\int_0^\infty dt e^{-\lambda t}\psi_t(X)=\int_0^\infty dt e^{-\lambda t}\int_0^t ds \psi_s(G(X)).$$
An integration by parts leads to
\be \label{sesher kobita}
\begin{split}
\int_0^\infty dt e^{-\lambda t} \psi_t((G-\lambda)(X))=0,~\mbox{for $X\in\clf.$}
\end{split}
\ee
Now recall that $\clf$ is a core for $\hat{\cll}$, and moreover by Lemma \ref{right kind of delta}, $B$ is relatively bounded with respect to $\hat{\cll}$. Thus $\clf$ is a core for $G$ as well. So for $Y\in \text{span}\{\cly\},$ let $\{X_n\in\clf\}_n$ be a sequence such that $G(X_n)$ goes to $G(Y)$. We have \be\label{the last equation}\int_0^\infty dt e^{-\lambda t}\psi_t((G-\lambda)(Y))=0\quad(Y\in\cly),\ee by an application of the dominated convergence theorem. In the notation of Lemma \ref{dg}, let $A:=(\hat{\cll}-\lambda)$, $D:=\cly$ and $T:=B$. The inequality (\ref{B ineq}) implies that we are in the set-up of Lemma \ref{dg}. Thus the denseness of $(G-\lambda)(\text{span}\{\cly\})$ follows by applying Lemma \ref{dg}. Therefore equation \eqref{the last equation} and (\ref{dct}) lead to 
$$\int_0^\infty dt e^{-\lambda t}\psi_t(X)=0\quad(X\in h\otp h, \lambda>\beta).$$  This implies that $\psi_t(X)=0$ for $X\in h\otp h$. In particular we have $\phi_t^{m,n}(x,y)=0$ for $x,y\in\cla_0,$ $t\geq0$ and $m,n\geq0$. Hence the result follows.  

\begin{crlre} \label{realpart}Suppose that the trace $\tau$ on the algebra $\cla$ is finite. Let $(j_t)_{t\geq0}$ be a quantum stochastic flow with the noise space $k_0$ and structure matrix $\Theta=\begin{pmatrix}\cll&\delta^\dagger\\\delta&\sigma\end{pmatrix}$, satisfying {\bf A(1)}--{\bf A(2)}, (i)--(ii) of {\bf A(3)} and either {\bf A(4)} or {\bf A(4)$^\prime$}. Moreover, assume the following condition instead of (iii) in {\bf A(3)}:
\[
\cla_0\subseteq D(\cll_2)\cap D(\cll_2^*).
\]
Then $(j_t)_{t\geq0}$ is a $\ast$-homomorphic flow.
\end{crlre}

\begin{proof}
Define a symmetric form $q(x,y)=-\innerl \cll_2(x),y\innerr-\innerl x,\cll_2(y)\innerr$ for all $x,y\in\cla_0,$ with domain $D(q):=\cla_0.$ This form is non-negative by {\bf A(1)} and (ii) of {\bf A(3)}. Since $q(x,y)=\innerl x,(-\cll_2-\cll_2^*)y\innerr$ $\forall x,y\in D(q),$ we may proceed along the lines of the standard proof for the Friedrich extension (see \cite{reed}, vol-II, page-177), which gives a positive self-adjoint operator $Z$ with $D(q)\subseteq D(Z)$ such that $q(x,y)=\innerl x,Z(y)\innerr.$ Set $C=Z^{\frac{1}{2}}.$ 
We have
\be
\frac{d}{dt}\|T_t(x)\|^2=\innerl \cll_2(T_t(x)),T_t(x)\innerr+\innerl T_t(x),\cll_2(T_t(x))\innerr 
=-\|C(T_t(x))\|^2.
\ee
Now we may proceed as in Lemma \ref{extension of C} and Theorem \ref{main theorem} to conclude the result.
\end{proof}
The following corollary is a straightforward application of Lemma \ref{jevaabehoy} and Theorem \ref{main theorem}.
\begin{crlre}
\label{2000}
Let $(j_t)_{t\geq0}$ be a quantum stochastic flow on $\cla$, with the noise space $k_0$ and structure matrix $\Theta=\begin{pmatrix}\cll&\delta^\dagger\\\delta&\sigma\end{pmatrix}$, satisfying {\bf A(1)}--{\bf A(3)}. Suppose that the associated semigroups $(j_t^{c,d})_{t\geq0}$, $c,d\in k_0$ satisfy the estimate ${\bf A(4)}^\prime$ of Lemma \ref{jevaabehoy} with a suitable total subset $\clw$. Then $(j_t)_{t\geq0}$ is a $\ast$-homomorphic flow.
\end{crlre}

Combining Lemma \ref{qs flow is homomorphic for type-1} with Theorem \ref{main theorem}, we get the following:

\begin{crlre}
Let $\cla$ be a type-I von-Neumann algebra with atomic center with the semifinite trace $\tau$ as in the Lemma \ref{qs flow is homomorphic for type-1}. Suppose that $(j_t)_{t\geq0}$ is a quantum stochastic flow on $\cla$, with the noise space $k_0$ and structure matrix $\Theta=\begin{pmatrix}\cll&\delta^\dagger\\\delta&\sigma\end{pmatrix}$, satisfying {\bf A(1)}--{\bf A(3)}. Then $(j_t)_{t\geq0}$ is a $\ast$-homomorphic flow.
\end{crlre}

\section{Applications}
In this section we give some applications of Theorem \ref{main theorem}, including a strong version of the Trotter product formula for quantum stochastic flows with unbounded coefficients and an extension of a previously known dilation result, as obtained in \cite{lingaraj}, for a class of quantum dynamical semigroup on UHF algebras (\cite{matsui}). Using the strong version of the Trotter product formula, we also give a new construction of Brownian motion on compact Lie groups and a construction of random walk on discrete groups. 

\subsection{ A strong Trotter product formula for quantum stochastic flows with unbounded coefficients}\label{trotter's section}

Throughout this section, we fix a C* or von-Neumann algebra $\cla$ equipped with a faithful, semifinite and lower-semicontinuous trace $\tau$ satisfying the assumption {\bf A(2)}, with a dense $\ast$-subalgebra $\cla_0$  and also two C* or von-Neumann $\ast$-homomorphic quantum stochastic flows $(j_t^{(1)})_{t\geq0}$ and $(j_t^{(2)})_{t\geq0}$, with noise spaces $k_1$ and $k_2$ and structure matrices  $\theta^{(1)}:=\begin{pmatrix}\cll^{(1)}&\delta^{\dagger(1)}\\\delta^{(1)}&\sigma^{(1)}\end{pmatrix}$ and $\theta^{(2)}:=\begin{pmatrix}\cll^{(2)}&\delta^{\dagger(2)}\\\delta^{(2)}&\sigma^{(2)}\end{pmatrix}$ respectively, with the common domain algebra $\cla_0$. 

%Let $(e_i)_{i\geq1}$ and $(l_i)_{i\geq1}$ be orthonormal bases for $k_1$ and $k_2$ respectively, so that the set $\clg:=\{(\lambda e_i,0),(0,\beta l_j)|\lambda,\beta\in\IC\}_{i,j\geq1}$ is total in $k_1\oplus k_2.$  Let $\clm$ be the set of step functions $f$ supported over intervals with dyadic end points and taking values in $\clg.$
%
%It is known \cite{skeide} that $\{e(f)|f\in\clm\}$ is total in $\Gamma(L^2(\IR_+,k_1\oplus k_2)).$ 

Following the discussions in Section 3 of \cite{lindsaykbs}, we define the Trotter product formula of two quantum stochastic flows.

\subsection*{Definition of Trotter product of quantum stochastic flows}

Let $\Gamma_l:=\Gamma(L^2(\IR_+,k_l)),l=1,2$. Set $\Gamma:=\Gamma_1\ot\Gamma_2$, so that $\Gamma:=\Gamma(L^2(\IR_+,k_1\oplus k_2))$. Let $\Xi_t,t\geq0$ be the map defined in Section \ref{Fundamental martingales}, with $k_0:=k_1\oplus k_2$. For $x\in\cla$, let 
\be
\begin{split}
&\eta_t(x):=\left(\widehat{j}^{(1)}_t\bullet \widehat{j}^{(2)}_t\right)(x),
\end{split}
\ee
where $\bullet$ is the product defined in Proposition \ref{matrix space tensor} and $\widehat{j}^{(l)}_t,l=1,2,t\geq0$ is the map, as given in Definition \ref{cocycle definition}. Take a dyadic partition of the whole real line $\IR$ and for $s<t$, consider the part of the partition in the interval $[s,t]$ as described in the picture below:\\

%\[
%\begin{diagram}
%\arrow{e,-}\node{|_{_\frac{[2^ns]}{2^n}}}\arrow{e,-}\node{[s}\arrow{e,-}\node{|_{_\frac{[2^ns]+1}{2^n}}}\arrow{e,-}\node{|_{_\frac{[2^nt]}{2^n}}}\arrow{e,-}\node{t]}\arrow{e,-}\node{|_{_\frac{[2^nt]+1}{2^n}}}\arrow{e,-}
%\end{diagram}
%\]

$$|_{[2^ns]\cdot2^{-n}}--\left[s--\left|_{([2^ns]+1)\cdot 2^{-n}}---------\right|_{[2^nt]\cdot 2^{-n}}--t\right],$$
where $[t]:=~\text{greatest integer}\leq t$ for real $t$ and $n\in\IN$ is sufficiently large. 

\bdfn\label{trotter product defined}
Set
\be\label{product formula}
\begin{split}
\phi^{(n)}_{[s,t]}:=&\Phi_{s,([2^ns]+1)2^{-n}}\bullet
\Phi_{([2^ns]+1)2^{-n},2^{-n}}\bullet\Phi_{([2^ns]+2)2^{-n},2^{-n}}\bullet...\\&\bullet
\Phi_{([2^nt]-1)2^{-n},2^{-n}}\bullet\Phi_{[2^nt]2^{-n},t-[2^nt]2^{-n}},
\end{split}
\ee
where $\Phi_{s,t}:=\Xi_s\circ\eta_t$. Set $\phi_t^{(n)}:=\phi_{[0,t]}^{(n)}.$
The map $\phi_t^{(n)}$ will be called the {\it n-fold Trotter product of the flows $(j_t^{(1)})_{t\geq0}$ and $(j_t^{(2)})_{t\geq0}$.}\\
\edfn

Note that the map $\phi_{[s,t]}^{(n)}$ is a $\ast$-homomorphism for each $n$. 

\begin{thm}\label{stkp}
 
Suppose that  $(j^{(l)}_t)_{t\geq0}$, $l=1,2$ are C*-algebraic flows  satisfying {\bf A(1)}--{\bf A(3)} and the conditions of Lemma \ref{jevaabehoy} with the corresponding total sets $\clw_1,\clw_2$ respectively. Furthermore assume the following:

\begin{enumerate}
\item[(a)]
$\cll^{(1)}_2+\cll^{(2)}_2$ is a pre-generator of a $C_0$ contractive and analytic semigroup in $h$ such that $\cla_0$ is a core for the generator.

\item[(b)]
For each $c_j,d_j$ belonging to $\clw_j~(j=1,2)$,
\[ 
\sum_{j=1}^{2}\left(\cll^{(j)}+\innerl c_j,\delta^{(j)}\innerr+\delta^{\dagger(j)}_{d_j}+\innerl c_j,\sigma_{d_j} \innerr+\innerl c_j,d_j \innerr\right)
\]
is a pre-generator of a $C_0$-semigroup in $\cla$.

% Suppose furthermore that these two flows satisfy the following:
% \begin{enumerate}
% \item[(a)] For $x\in\cla\cap L^1(\tau)$, $j^{c^{(j)},d^{(j)}}_t(x)$ satisfies (\ref{Qi}) or {\bf A(vi)} for $c_j,d_j\in k_j,$ $j=1,2;$ 
% 
% 
% \item[(b)]$\tau(\mathcal{L}^{(j)}(x^*x))\leq0$ for $j=1,2;$
% 
% \item[(c)] each of the semigroups generated by $\cll^{(1)}$ and $\cll^{(2)}$ as well as their Trotter product limit have analytic $L^2(\tau)$ extensions as\\ semigroups.
% \end{enumerate}
\end{enumerate}

Then  $\phi_t^{(n)}(x)$ as in the Definition \ref{trotter product defined} converges in the strong operator topology of $B(h\otimes\Gamma)$ to a quantum stochastic flow $(j_t)_{t\geq0}$ with the noise space $k_1\oplus k_2$ and structure matrix 
$$\Theta:=
\left(
\begin{array}{lll}
\mathcal{L}^{(1)}+\mathcal{L}^{(2)} & \delta^{\dagger(1)} &{\delta^{\dagger(2)}}\\
\delta^{(1)} & \sigma^{(1)} &0\\
\delta^{(2)} &0 &\sigma^{(2)}
\end{array}
\right)
.$$

% \item[{\bf (ii)}]
\end{thm}

\begin{proof}

Following the line of arguments given in Section 3 of \cite{lindsaykbs}, it follows that the Trotter product $\phi_t^{(n)}(x)$ for $x\in\cla$, converges in the weak operator topology of $B(h\ot\Gamma)$ to a quantum stochastic flow $(j_t)_{t\geq0}$ with the noise space $k_1\oplus k_2$ and structure matrix $\Theta$.

Let $\Delta_0:=[s,\frac{[2^ns]+1}{2^n})$, $\Delta_j:=[\frac{[2^ns]+j}{2^n},\frac{[2^ns]+j+1}{2^n})$ for $1\leq j\leq [2^nt]-[2^ns]-1$ and $\Delta^\prime:=[\frac{[2^nt]}{2^n},t)$. Let $\chi_0:=1_{\Delta_0}$ $\chi_j:=1_{\Delta_j}$ and $\chi^\prime:=1_{\Delta^\prime}$. Suppose that $M_l\equiv M_l(\|c_l\|,\|d_l\|)$, $l=1,2$ is the constant in the estimate ${\bf A(4)^\prime}$ for $j_t^{(l)}$, i.e. $\| j_t^{(l)c_l,d_l}(x)\|_1 \leq {\rm exp}(tM_l)\| x\|_1$ for $c_l,d_l \in \clw_l$. Then for $c:=c_1\oplus c_2,d:=d_1\oplus d_2$ and $x\in\cla$ we have:
 
\be
\begin{split}
&\innerl e(c1_{[s,t]}),\phi^{(n)}_{[s,t]}(x)e(d1_{[s,t]}) \innerr\\
&=\innerl e(c\chi_0)\bigotimes_{j=1}^{[2^{n}t]-[2^{n}s]-1}  e(c\chi_j)\ot e(c\chi^\prime)~,~\phi_{[s,t]}^{(n)}(x)~e(d\chi_0)\bigotimes_{j=1}^{[2^{n}t]-[2^{n}s]-1}e(d\chi_j)\ot e(d\chi^\prime)\innerr\\
&=\left(\eta^{c,d}_{([2^ns]+1)2^{-n}-s}\circ (\eta^{c,d}_{2^{-n}})^{[2^nt]-[2^ns]-1}\circ\eta^{c,d}_{t-[2^nt]2^{-n}}\right)(x).
\end{split}
\ee
Take $\clw$ to be the total subset of $k_1 \oplus k_2$ consisting of vectors of the form $c_1 \oplus c_2$($c_i \in \clw_i$)  with either $c_1$ or $c_2$ is zero. A direct computation implies that $\eta^{c,d}_w(x)=\left(j_w^{(1)c_1,d_1}\circ j_w^{(2) c_2,d_2}\right)(x).$ Thus for $x\in\cla\cap L^1(\tau)$ and $c,d \in \clw$,
\be
\| \eta^{c,d}_w(x)\|_1
\leq e^{w(M_1+M_2)}\|x\|_1,
\ee
where $M_1,M_2$ depend on $\|c\|,\|d\|$. Thus $$\|\innerl e(c1_{[s,t]}),\phi_{[s,t]}^{(n)}(x)e(d1_{[s,t]})\innerr\|_1\leq e^{(t-s)(M_1+M_2)}\|x\|_1.$$
From this, it follows that the limiting quantum stochastic flow $(j_t)_{t\geq0}$ satisfies
 the hypothesis of Lemma \ref{jevaabehoy}. As $(j_t^{(1)})_{t\geq0}$ and $(j_t^{(2)})_{t\geq0}$ are $\ast$-homomorphic, they satisfy the assumption {\bf A(1)}, from which it follows  by an easy computation that $(j_t)_{t\geq0}$ satisfies {\bf A(1)} too.  The assumption {\bf A(3)} follows from the condition (a) of the present theorem and {\bf A(2)} is  the standing assumption throughout this sub\textit{}section. Thus by Corollary \ref{2000} of Theorem \ref{main theorem}, $(j_t)_{t\geq0}$ is a $\ast$-homomorphic flow. Hence $\phi_{t}^{(n)}(x)$ converges to $j_t(x)$ for each $x\in\cla$ and $t\geq0$ in the strong operator topology of $B(h\ot\Gamma)$.
\end{proof}

In the von-Neumann algebraic case, we have the following theorem for a special class of quantum stochastic flows:

\bthm\label{stkp vNa}
Let the quantum stochastic flow $(j_t^{(l)})_{t\geq0}$, $l=1,2$ be a von-Neumann algebraic flow, satisfying {\bf A(1)}--{\bf A(3)} and the estimate in Lemma \ref{jevaabehoy}. Furthermore assume the following: 

\begin{enumerate}
\item[(a)]
$\sigma^{(j)}=0$ for $j=1,2$.

\item[(b)] The closure of the operator $\mathcal{L}^{(1)}_{2}+\mathcal{L}^{(2)}_{2}$ generates a $C_0$ contractive and analytic semigroup in $h$ such that $\cla_0$ is a core for the generator.
 
\end{enumerate}
Then  $\phi_t^{(n)}(x)$ as in Definition \ref{trotter product defined}, converges in the strong operator topology of $B(h\otimes\Gamma)$ to a quantum stochastic flow $(j_t)_{t\geq0}$ with the noise space $k_1\oplus k_2$ and structure matrix 
$$\Theta:=
\left(
\begin{array}{lll}
\mathcal{L}^{(1)}+\mathcal{L}^{(2)} & \delta^{\dagger(1)} &{\delta^{\dagger(2)}}\\
\delta^{(1)} & 0 &0\\
\delta^{(2)} &0 &0
\end{array}
\right)
.$$

\ethm

\begin{proof}
Set $\theta^{i,(1)}_0(x):=\innerl e_i,\delta^{(1)}\innerr(x),$ $\theta^{i,(2)}_0(x):=\innerl l_i,\delta^{(2)}\innerr(x)$ for $i\geq1$, where $\{e_i\}_i$ and $\{l_i\}_i$ are orthonormal bases for $k_1$ and $k_2$ respectively. 
For $x\in\cla_0$ and every positive integer n, we have

\be
\begin{split}
\|\delta^{(j)}(x)\|^2_2=\sum_i\|\theta^{i,(j)}_0(x)\|^2_2&\leq 2\|\cll^{(j)}_2(x)\|_2\|x\|_2\\
&\leq2\frac{1}{\sqrt{2}~n}\|\cll^{(j)}_2(x)\|_2\frac{1}{\sqrt{2}}n\|x\|_2\\
&\leq\left\{\frac{1}{\sqrt{2}}(\frac{1}{n}\|(\cll^{(j)}_2(x))\|_2+n\|x\|_2)\right\}^2\\
&\mbox{for $j=1,2$ .}\\%
\end{split}
\ee

Thus the operators $\theta^{i,(j)}_0$ are relatively bounded with respect to $\cll^{(j)}_2$ with the bound less than $1$. Similar calculations hold for $\theta^{0,(j)}_i(x)(\equiv\theta^{i,(j)}_0(x^*)^*)$, $j=1,2$. Since $\cll_2^{(j)}$ are the pre-generators of contractive analytic semigroups in $h$, we see that the operators $$\theta^{i,(j)}_0+\theta^{0,(j)}_k+\cll^{(j)}_2,$$ for $j=1,2,i,k\geq1$, are pre-generators of $C_0$-semigroups (see \cite{kato} Theorem 2.4 and Corollary 2.5, p 497-498). This implies that for $c,d\in\clg$ where $\clg:=\{(e_i,0),(0,l_j):~i,j\geq1\}$, $j^{(l)c,d}_t(x)\in\cla\cap h$, for $t\geq0$ and $x\in\cla_0$. Moreover we have:
for $x\in\cla_0$, $c,d\in\clg$, $$\|\innerl~c,\delta^{(1)}\oplus\delta^{(2)}\innerr~(x)\|_2\leq\left\{(\frac{1}{\sqrt{2}~n}\|(\cll^{(1)}_2+\cll_2^{(2)})(x)\|_2)+\frac{n}{\sqrt{2}}\|x\|_2\right\}\|c\|,$$ $$\|(\delta^{(1)}\oplus\delta^{(2)})^\dagger_d(x)\|_2\leq\left\{(\frac{1}{\sqrt{2}~n}\|(\cll^{(1)}_2+\cll_2^{(2)})(x)\|_2)+\frac{n}{\sqrt{2}}\|x\|_2\right\}\|d\|.$$ Thus by hypothesis (b), for $c,d\in\clg$, the operator $$\innerl~c,\delta^{(1)}\oplus\delta^{(2)}\innerr~+(\delta^{(1)}\oplus\delta^{(2)})^\dagger_d+\cll^{(1)}_2+\cll^{(2)}_2+\innerl c,d\innerr$$ generates a $C_0$-semigroup in $h$.  

Let $u,v\in L^\infty(\tau)\cap L^2(\tau)$ and $x\in\cla_0\subset\cla$. We have:
\[
\lgl u,(j^{(1)c,d}_{\frac{t}{n}}\circ j^{(2)c,d}_{\frac{t}{n}})^n(x)v \rgl
=\lgl v^*u, (j^{(1)c,d}_{\frac{t}{n}}\circ j^{(2)c,d}_{\frac{t}{n}})^n(x)\rgl\quad(x\in\cla_0). 
\]
Note that if we let $n\rightarrow\infty$, the term on the right hand side of the above expression converges, by our previous discussions. From this, it follows that $\phi^{(n)}_t(x)$ converges in the weak operator topology of $B(h\ot\Gamma)$, for $x\in\cla_0$ and hence for $x\in\cla$ by the density of $\cla_0$ in $\cla$. The strong convergence follows by an application of Corollary \ref{2000} of Theorem \ref{main theorem} as in the proof of Theorem \ref{stkp}.
\end{proof}

\begin{rmrk}\label{alternative condition}
The conclusions of Theorem \ref{stkp} and Theorem \ref{stkp vNa} also hold under the weaker assumption {\bf A(4)} replacing the estimate of Lemma \ref{jevaabehoy}.
\end{rmrk}

\begin{rmrk}\label{multi-dimensional}
By repeating the above arguments, the conclusions of Theorem \ref{stkp} and Theorem \ref{stkp vNa}  hold for finitely many quantum stochastic flows, each satisfying the corresponding hypotheses.
\end{rmrk}

\subsection{Construction of classical and non-commutative stochastic processes:}

We shall now illustrate how to construct various multidimensional processes as random Trotter product limits of the corresponding ``marginals". Our examples will include Brownian motion on compact Lie groups and random walk on discrete groups. We begin with some general facts about stochastic processes on locally compact groups.\\

Let $G$ be a second countable locally compact group. It is well-known that the topology of such a group is metrizable by a metric which makes it a Polish (complete and separable) space. A choice of such a metric is given by 
\be \rho(g,g^\prime):=\sum_{n=1}^\infty\{\frac{|\phi_n(g)-\phi_n(g^\prime)|}{2^n(1+|\phi_n(g)-\phi_n(g^\prime)|)}\},
\ee
where $\{\phi_i\}_{i=1}^\infty$ is a countable family of functions from $C_0(G)$ which separates points of $G$.

\begin{lmma}\label{asol2}
Let $(X_n)_n$ be a $G$-valued random variable on some probability space $(\Omega,\clf,P)$ and suppose that for all $\psi$ in $L^2(G)$ and for all $\phi$ in $C_0(G),$ 
$$\int_G dg \int_\Omega d\omega |\psi(g)|^2|\phi(g.X_n)-\phi(g.X_m)|^2\longrightarrow0$$ as $n,m\longrightarrow\infty,$ where $dg$ is the left-invariant Haar measure on $G.$ Then there exists a random variable $X:\Omega\longrightarrow G$ such that $X_n\longrightarrow X$ in probability.
\end{lmma}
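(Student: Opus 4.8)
The plan is to manufacture the limit $X$ along a subsequence, using completeness of $L^2$ together with completeness of $(G,\rho)$, and then to promote subsequential almost-sure convergence to convergence in probability of the whole sequence by the subsequence principle, the uniqueness of the limit being forced by the fact that the $\phi_k$ separate points.

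First I would fix once and for all a strictly positive $\psi_0\in L^2(G)$ (such a function exists since $G$, being second countable and locally compact, is $\sigma$-compact, so Haar measure is $\sigma$-finite), and set $d\mu:=|\psi_0|^2\,dg$; this is a finite measure equivalent to Haar measure. Applying the hypothesis with $\psi=\psi_0$ and $\phi=\phi_k$ (each $\phi_k$ lies in $C_0(G)$ and is bounded, so $U_n^{(k)}(\omega,g):=\phi_k(gX_n(\omega))$ lies in $L^2(\Omega\times G,\,dP\otimes d\mu)$), the hypothesis says precisely that the squared $L^2$-distance $\int_{\Omega\times G}|\phi_k(gX_n)-\phi_k(gX_m)|^2\,dP\,d\mu$ tends to $0$ as $n,m\to\infty$. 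Hence $(U_n^{(k)})_n$ is Cauchy and converges in $L^2(\Omega\times G,\,dP\otimes d\mu)$ to some $u^{(k)}$.

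Next, by a diagonal extraction over the countably many $k$, I would pass to a subsequence $(n_j)$ along which $\phi_k(gX_{n_j}(\omega))\to u^{(k)}(\omega,g)$ for $(dP\otimes d\mu)$-a.e.\ $(\omega,g)$ and all $k$ simultaneously. For such a generic $(\omega,g)$, each real sequence $(\phi_k(gX_{n_j}(\omega)))_j$ is Cauchy, so the dominated-tail estimate inherent in the definition (\ref{metric}) of $\rho$ gives $\rho(gX_{n_i}(\omega),gX_{n_j}(\omega))\to0$; completeness of $(G,\rho)$ then produces a limit $gX_{n_j}(\omega)\to Y(\omega,g)\in G$. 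Multiplying on the left by the (continuous) map $g^{-1}$ yields $X_{n_j}(\omega)=g^{-1}\big(gX_{n_j}(\omega)\big)\to g^{-1}Y(\omega,g)$, and since the left-hand side is independent of $g$ so is the limit. By Fubini, for $P$-a.e.\ $\omega$ the slice of admissible $g$ is non-null, so choosing any such $g$ defines the measurable map $X(\omega):=\lim_j X_{n_j}(\omega)$, and $X_{n_j}\to X$ almost surely.

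Finally I would upgrade to convergence in probability. Given an arbitrary subsequence of $(X_n)$, the Cauchy condition holds a fortiori along it, so the construction above yields a further subsequence converging a.s.\ to some $\tilde X$. Along both subsequences $\phi_k(gX_\cdot(\omega))$ converges a.e.\ to the same $L^2$-limit $u^{(k)}$, whence $\phi_k(gX(\omega))=\phi_k(g\tilde X(\omega))=u^{(k)}(\omega,g)$ for a.e.\ $(\omega,g)$ and all $k$; fixing $\omega$ with a non-null admissible slice and a point $g$ in it, the separation property of $\{\phi_k\}$ forces $gX(\omega)=g\tilde X(\omega)$, i.e.\ $X=\tilde X$ a.s. Thus every subsequence has a further subsequence converging a.s.\ to the common limit $X$, which is exactly convergence $X_n\to X$ in probability. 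I expect the genuine obstacle to be the passage from the hypothesis, which controls only the left-translated and $g$-averaged increments $\phi(gX_n)-\phi(gX_m)$, to information about $X_n$ itself; the key device is the identity $X_n(\omega)=g^{-1}(gX_n(\omega))$ evaluated at a single $\omega$-dependent good $g$, which converts averaged a.e.\ convergence of the translates into convergence of $X_n(\omega)$ without any uniform continuity of left translation with respect to $\rho$, while point-separation of $\{\phi_k\}$ makes the limit unique.
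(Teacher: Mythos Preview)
Your argument is correct and rests on the same central device as the paper's: pass to the product space $\Omega\times G$ with the weight $|\psi|^2\,dg$, work with the translated variables $Y_n(\omega,g)=gX_n(\omega)$, produce a limit $Y$, and then recover $X(\omega)=g^{-1}Y(\omega,g)$ at a well-chosen $g$. The execution differs. The paper controls $\mathbb{P}(\rho(Y_n,Y_m)\geq\epsilon)$ directly by Markov's inequality and dominated convergence over the series defining $\rho$, deduces that $(Y_n)$ is Cauchy in $\mathbb{P}$-probability with limit $Y$, and then appeals to Egoroff to locate a set $\Delta$ of positive Haar measure on which $f_n(g):=P(\rho(gX_n,Y)\geq\epsilon)\to0$, fixing a single $g\in\Delta$. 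You instead use $L^2$-completeness coordinatewise in the $\phi_k$, extract a diagonal a.e.\ convergent subsequence, rebuild the $G$-valued limit from the separating family via completeness of $(G,\rho)$, and upgrade to full convergence in probability by the subsequence principle together with a clean uniqueness step. What your route buys is that the passage from ``$\int |\psi|^2 f_n\to0$'' to an actual good point $g$ is made fully explicit (the paper's Egoroff invocation is terse here, since $L^1$-convergence alone does not give pointwise convergence without first passing to a subsequence); what the paper's route buys is that one never has to speak of subsequences at the level of $X_n$ itself.
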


\begin{proof}
We choose and fix some  $\psi$ in $L^2(G)$ with $\|\psi\|_2=1,$ and let $d\IP(\omega,g):=dP(\omega)\ot|\psi(g)|^2 dg.$ Since $\int_G dg|\psi(g)|^2\int_\Omega dP(\omega) |\phi_i(g.X_n)-\phi_i(g.X_m)|^2\rightarrow0,$ for every $i,$ it follows by setting $Y_n(g,\omega):=g.X_n(\omega),$ and using the dominated convergence theorem that for every $\epsilon>0,$ 

$$\IP(\rho(Y_n,Y_m)\geq\epsilon)\leq\frac{1}{\epsilon^2}\IE^\IP(\rho(Y_n,Y_m))\rightarrow0,$$
as $m,n\longrightarrow\infty$, where $\IE^\IP$ denotes the expectation with respect to the probability measure $\IP$.  Thus there is a $G$-valued random variable Y defined on $\Omega,$ such that 
$$Y_n\stackrel{\IP}{\longrightarrow}Y.$$ So 

$$\IP(\rho(Y_n,Y)\geq\epsilon)=\int_G dg |\psi(g)|^2P(\rho(g.X_n,Y)\geq\epsilon)\equiv\int_G dg |\psi(g)|^2 f_n(g)\longrightarrow0,$$ where $f_n(g):=P(\rho(g.X_n,Y)\geq\epsilon).$ By Egoroff's theorem, there exists a measurable set say $\Delta$ of positive Haar-measure such that for all g in $\Delta,$ we have $f_n(g)\longrightarrow0$ and the proof of the theorem is complete by taking $X(\omega)=g^{-1}Y(g,\omega),$  for any fixed g in $\Delta$.
\end{proof}

We now give a few concrete examples.

\begin{enumerate}
\item[{\bf (A)}] {\bf Classical and non-commutative Brownian motion:}
Assume, as in the Subsection 3.2 (Lemma \ref{asol1} and the discussion following it), that $G$ be a second countable and compact Lie group of dimension k, acting smoothly on a C*-algebra $\cla$ and $\tau$ is a lower-semicontinuous, faithful and finite trace on $\cla$.  Let $\cla^\infty$, $U_g$ and $\{\chi_\ell\}_{\ell=1}^k$ be as in the Subsection 3.2 and let $G_\ell$  denote  the one-parameter subgroup $\exp(t\chi_l),~t\in\IR.$  Define 
$j_t^{(\ell)}:\cla\rightarrow\cla^{\prime\prime}\ot B(L^2(W^{(l)}))(\cong\cla^{\prime\prime}\ot B(\Gamma(L^2(\IR_+)))),$ by  $j_t^{(\ell)}(x)(\omega):=\alpha_{\exp(W^{(\ell)}_t(\omega)\chi_l)}(x),$ where $W^{(\ell)}_t$ is the standard Brownian motion on $\IR.$ It follows from the discussion of Subsection 3.2,  replacing $G$ by $G_\ell,$ that each $j_t^{(\ell)}$ satisfies the assumptions {\bf A(1)}--{\bf A(3)}. Thus, to verify the hypotheses of  Theorem \ref{stkp} for the flows $(j_t^{(l)})_{t\geq0}$, $l=1,2,\ldots,k$, we need to check only the conditions (a) and (b) of that theorem. To this end, let us first verify  that $\cll=\sum_{\ell=1}^k\cll_2^{(\ell)}$ is the pre-generator of a $C_0$-semigroup. For this we proceed as follows:

Let $\delta_\ell$ be the norm generator of the automorphism group $(\alpha_{\exp(t\chi_\ell)})_{t\in\IR}.$ Then $\delta_\ell$ extends to an unbounded, densely defined skew-adjoint operator in $L^2(\tau)$, which generates the unitary group $(U_{\exp(t\chi_\ell)})_{t\in\IR}$ where $\alpha_g$ is implemented by the group of unitaries $U_g$  as discussed in Subsection 3.2.  By an abuse of notation, we again denote this extension by $\delta_\ell.$ Note that $\cll^{(\ell)}_2=\frac{1}{2}\delta_\ell^2$ on $\cla^\infty$ for all $\ell$. Thus  $\sum_{\ell=1}^k\cll^{(\ell)}_2=\frac{1}{2}\sum_{\ell=1}^k\delta_\ell^2$ is a densely defined, negative and symmetric operator. By Nelson's analytic vector theorem for the representations of the Lie algebra \cite[Theorem 3, p. 591]{nelson} and \cite[Theorem X.39]{reed}, $\sum_{\ell=1}^k\cll^{(\ell)}_2$ is essentially self-adjoint in $L^2(\tau)$ and hence its closure generates a $C_0$ contraction and analytic semigroup. This verifies condition (a) of Theorem \ref{stkp}. The 
condition (b) follows by noting  that $\sigma^{(\ell)}=0$ in this case and each of  the maps $\delta^{(l)} \equiv \delta_{\ell}$ is relatively compact with respect to $\cll$. So Theorem \ref{stkp} can be applied  and the limiting flow gives a $\ast$-homomorphic quantum stochastic flow on $\cla$ with generator $\sum_{l=1}^{k}\cll^{(l)}$.  Specializing this to the case when $\cla=C(G)$,  also applying  Lemma \ref{asol2}, we get the convergence in probability of the following sequence of random variables:    

\be
\begin{split}
X_t^{(n)}&:=\prod_{i=1}^k \prod_{l=0}^{[2^nt]}\exp((W^{(i)}_{\frac{[2^nl]+1}{2^n}}-W^{(i)}_{\frac{l}{2^n}})\chi_i),
\end{split}
\ee
where the limiting random variable is clearly a Brownian motion on $G,$ giving a result similar to that in \cite{tkp}.

\begin{paragraph}~
In case of $G=\IT^2$ and $\cla$ the  irrational-rotational C*-algebra $\cla_\theta$ (see page 254 of \cite{dgkbs}), the quantum Brownian motion described in page-275 of \cite{dgkbs} can be constructed using the method described here.
\end{paragraph}

\item[{\bf (B)}] {\bf Random walk on discrete group:} Let G be a discrete and finitely generated group, generated by a symmetric set of torsion free generators, say $\{g_1,g_2,\ldots,g_{2k}\}$. Let e be the identity element of G, $g_1g_{k+1}=e$ and $g\mapsto\alpha_g$ for each $g$ be the automorphism obtained  by the action of $G$ on itself.  Take $\cla=C_0(G)$ and $\tau$ to be the trace with respect to the counting measure. Consider 2k mutually independent Poisson-processes  
$(N^{(i)}_t)_{t\geq0},~i=1,\ldots, 2k,$  on $\IN\cup\{0\},$ with intensity parameter $(\lambda _i)_{i=1}^{2k}$ respectively. Let $Z_t^{(i)}:=N_t^{(i)}-N^{(k+i)}_t,$ $i=1,2,\ldots,k.$  Define 
$j^{(l)}_t:\cla\rightarrow L^\infty(G)\ot B(L^2(N_t^{(l)},N^{(k+l)}_t)),$ $(l=1,2,\ldots,k)$, by 
$j_t^{(l)}(\phi)(\omega)=\alpha_{Z^{(l)}_t(\omega)}(\phi).$ Since the generator $\cll^{(l)}$ of the vacuum expectation  semigroup associated with $j_t^{(l)}$ is bounded, so are the other structure maps. This implies that all the hypotheses of Theorem \ref{stkp} are satisfied. So by Lemma \ref{asol2}, we have convergence in probability of the following sequence of random variables $$X_t^{(n)}:=\prod_{l=0}^{[2^nt]}\prod_{i=1}^{k}\clg^{(i)}_{\frac{l+1}{2^n}}(\clg^{(i)}_{\frac{l}{2^n}})^{-1},$$ where 
$\clg^{(l)}_t(\omega):=g_l^{Z_t^{(l)}(\omega)}.$ The limit is a random variable $X_t$ which is a time homogeneous continuous time simple random walk. 
\end{enumerate}

\subsection{An Application to a class of  stochastic processes on a UHF algebra}

We begin this subsection with the following definition:

\bdfn\label{definition of dilation}
Let $\cla$ be a C* or von-Neumann algebra and $(T_t)_{t\geq0}$ be a quantum dynamical semigroup on $\cla$. A $\ast$-homomorphic C*-algebraic ( or von-Neumann algebraic) quantum stochastic flow $(j_t)_{t\geq0}$ on $\cla$ with domain algebra $\cla_0$ and noise space $k_0$ is called a quantum stochastic dilation of $(T_t)_{t\geq0}$ if we have:

\begin{enumerate}
\item[(i)]
$\cla_0\subseteq D(\cll)$, where $\cll$ is the generator of $(T_t)_{t\geq0}$.
\item[(ii)]
\[
j_t^{0,0}(x)=T_t(x)\quad(x\in\cla,t\geq0). 
\] 
\end{enumerate}
\edfn

In this subsection, we apply the results obtained in Section \ref{trotter's section} to construct quantum stochastic dilation (in the sense of Definition \ref{definition of dilation}) of a class of quantum dynamical semigroups on a uniformly hyperfinite C*-algebra. Let $\cla$ be such an algebra, generated by the infinite tensor product of finite dimensional matrix algebras $M_N(\IC),$ i.e. the C*-completion of the algebraic tensor product $\otimes_{j\in\IZ^d}M_N(\IC)$ where $N$ and $d$ are two fixed positive integers. The unique normalized trace $tr$ on $\cla$ is given by $tr(x)=\frac{1}{N^n}Tr(x)$ for $x\in M_{N^n}(\IC).$ For a simple tensor $a$ $\in\cla$, let $a_{(j)}$ be the $j^{th}$ component of $a$. We define support of $a$, denoted by $supp(a)$ as:
$$supp(a):=\{j\in\IZ^d|~a_{(j)}\neq1\}.$$ 
%
%For a general element $a=\sum_{n=1}^\infty c_n a_n$ where $a_n$ is a simple tensor for all $n\geq1$ and the sum is finite, we define the support of $a$, denoted by $supp(a)$ as:$$supp(a):=\cup_{n\geq1}supp(a_n).$$
%
 Let $\cla_{loc}$ be the $*$-algebra generated by finitely supported simple tensors in $\cla.$ Clearly $\cla_{loc}$ is dense in $\cla.$  For $k\in\IZ^d,$ the translation $\tau_k$ on $\cla$ is an automorphism determined by $\tau_k(x_{(j)})=x_{(j+k)}$.\\

Note that $M_N(\IC)$ is generated by a pair of non-commutative representatives of the finite discrete group $\IZ_N=\{0,1,2,\ldots,N-1\}$ such that $U^N=V^N=1\in M_N(\IC)$ and $UV=\omega VU$ where $\omega$ is the $N^{th}$ root of unity. Using this fact we get a unitary representation of $\clg=\prod_{j\in\IZ^d}G$ for $G=\IZ_N\times\IZ_N$ in the Hilbert space $L^2(tr)$ as follows: 
$$\clg\ni g\rightarrow U_g=\prod_{j\in\IZ^d}U^{(j)^{\alpha_j}}V^{(j)^{\beta_j}}\in\cla,$$ for $g=\prod_{j\in\IZ^d}(\alpha_j,\beta_j),$ $\alpha_j,\beta_j\in\IZ_N.$ We set $|g|:=\{j\in\IZ_d:~(\alpha_j,\beta_j)\neq(0,0)\}$. For a given completely positive map $\psi$ on $\cla$, formally we define the Linbladian $\cll:=\sum_{k\in\IZ^d}\cll_k$ where for $x\in\cla_{loc}$, $\cll_k(x):=\tau_k\cll_0(\tau_{-k}x)$ with\\ $\cll_0(x)\!:=\!-\frac{1}{2}\{\psi(1)x+x\psi(1)\}+\psi(x)$.  
%Let $\cla^\infty\!:=\!\{a\in\cla|a\!=\!\sum_{g\in\clg}c_g U_g,|a|:=\!\sum_{g\in\clg}|c_g||g|\!<\!\infty\}.$ Observe that $\cla^\infty$ is $\ast$ closed as $|a|=|a^*|.$ Fix $a,b\in\IZ_N$ and let $\clg^\prime:=\prod_{j\in\IZ^d}\IZ_{\IN}.$ Consider $\cla^\infty_1:=\{x\in\cla~|~x=\sum_{g\in\clg^\prime}c_gW_g,~|x|<\infty\}.$ In particular,
Consider the Linbladian $\cll$ for the completely positive map $\psi(x):=\sum_{l=1}^{p}r^{(l)*}xr^{(l)},~x\in\cla$, where  $r^{(l)}\in\cla,l=1,2,\ldots,p$. For $g^{(j)}=(\alpha_j,\beta_j)\in G,$ $j\in\IZ^d,$ we set $W_{j,g^{(j)}}:=U^{(j)\alpha_j}V^{(j)\beta_j}\in\cla_{loc}$ and define a set $\clc^1(\cla)$ as: $$\clc^1(\cla)\!:=\!\{x\in\cla:\sum_{j,g}\|W_{j,g^{(j)}}xW_{j,g^{(j)}}^*-x\|\!<\!\infty\}.$$ Matsui (\cite{matsui}) proved that the Linbladian $\cll$ thus constructed is well-defined on $\clc^1(\cla)$, closable and a pre-generator of a quantum dynamical semigroup (denoted by $(T_t^\psi)_{t\geq0}$) on $\cla$. Furthermore $\clc^1(\cla)$ is left invariant by $T_t^\psi$ for each $t\geq0$.

In \cite{lingaraj} the authors considered the problem of constructing quantum stochastic dilations of quantum dynamical semigroups arising as above. However they could construct quantum stochastic dilations for only a special class of semigroups, namely those for which the associated completely positive map $\psi$ is of the form: $\psi(x)=r^*xr$ for $r:=\sum_{g\in\prod_{j\in\IZ^d}\IZ_N}c_gW_g$ with $W_g:=\prod_{j\in\IZ^d}(U^aV^b)^{\alpha_j}$, where $g:=\prod_{j\in\IZ^d}\alpha_j$ such that $\sum_{g}|c_g||g|^2<\infty$, $a,b\in\IZ_N$ being fixed. Our aim is to generalize this result. First of all we will construct dilations by considering $\psi$ of the form given earlier i.e. $\psi(x):=\sum_{m=1}^p r^{(m)*}xr^{(m)}$, each $r^{(m)}$ satisfying either {\bf (I)} or {\bf (II)}  below:

\begin{enumerate}
\item[{\bf (I)}] 
 $r^{(m)}$ is a normal element of $\cla$;

\item[{\bf (II)}] 
 $r^{(m)}\in\cla_{loc}$ with $[r^{(m)},r^{(m)*}]\leq0$.
\end{enumerate}
For each $t\geq0$ set $T_t:=T_t^\psi$ and $T_t^{(m)}:=T_t^{\psi_m}$, where $\psi_m(x):=r^{(m)\ast}xr^{(m)}$, $x\in\cla$. We also denote by $\cll^\psi$ and $\cll^{(m)}$ the norm generators of $(T_t)_{t\geq0}$ and $(T_t^{(m)})_{t\geq0}$ respectively.

Before proving the main theorem of this subsection, we prove a few preparatory lemmas.  
As before, we will view $\cla$ as a C*-subalgebra of $B(L^2(tr))$. 

\begin{lmma}\label{L^1-ext}
Suppose that $(S_t)_{t\geq0}$ is a quantum dynamical semigroup on a C*-algebra $\clb$, equipped with a finite trace $\tau$. Let $\cll$ be the norm generator of $(S_t)_{t\geq0}$ and suppose that $\cla_0\subseteq D(\cll)$ is a dense $\ast$-subalgebra of $\clb$. Let $\tau(\cll(y))\leq0$ for all $y\geq0,~y\in\cla_0$. Then $(S_t)_{t\geq0}$ extends to $L^1(\tau)$ as a contractive $C_0$-semigroup.
\end{lmma}

\begin{proof}
In what follows, $L^1_\IR$ will denote the real Banach space obtained by taking the $L^1$-closure of the real vector space of self adjoint elements of $\clb$, whereas $L^1$ or equivalently $L^1(tr)$ will denote the usual non-commutative complex $L^1$-space of $\clb$.
Let $f(t)=\tau(S_t(y)),~y\geq0$ and $y\in\cla_0$. Then $f^\prime(t)=\tau(\cll(S_t(y)))\leq0$, which implies that $f(t)$ is monotone decreasing. Hence we have $f(t)\leq f(0)$ i.e. 
\be\label{ineq}
\|S_t(y)\|_1\leq\|y\|_1\quad(y\in\cla_0,y\geq0).
\ee
Let $y\in\clb$ be positive so that $y=x^*x$ for some $x\in\clb$. Since $\cla_0$ is dense in $\clb$, there is a sequence $(x_n)_n\in\cla_0$ such that $x_n\rightarrow x$ in $\|\cdot\|_\infty,$  hence  $x_n^*x_n\rightarrow x^*x$ in $\|\cdot\|_\infty.$ As $\tau$ is finite we have $\|\cdot\|_1\leq\|\cdot\|_\infty$. Thus we have $x_n^*x_n\rightarrow x^*x$ in $\|\cdot\|_1$, which implies that the inequality (\ref{ineq}) extends to all the positive elements of $\clb$. Decomposing a general self-adjoint $x$ in $\clb$  as $x=x_+-x_-$ such that $|x|=x_+ + x_-$ with  $x_+\geq0,x_-\geq0$, we have
\be
\|S_t(x)\|_1=\|S_t(x_+)-S_t(x_-)\|_1\leq\|S_t(x_+)\|_1+\|S_t(x_-)\|_1\leq\|x_+\|_1+\|x_-\|_1=\|x\|_1.
\ee
This allows us to extend  $S_t$  as a contractive map on the real Banach space $L^1_\IR.$ We denote this extension by $S_t^{\rm{sa}}$ and consider its complexification $S_t^\prime:L^1\rightarrow L^1$ given by $S_t^\prime(x)=S_t^{\rm{sa}}(Re(x))+iS_t^{\rm{sa}}(Im(x))$, where $x\in L^1\cap\clb$ and $Re(x),$  $Im(x)$ are the real and imaginary parts of $x$ respectively. Clearly, $S^\prime_t|_{\clb}=S_t .$ As $\|Re(x)\|_1\leq\|x\|_1$ and $\|Im(x)\|_1\leq\|x\|_1$, $S_t^\prime$ is a bounded (not necessarily contractive) map on $L^1.$ It follows that the dual map $S_t^{\prime,\ast}:L^\infty\rightarrow L^\infty$ is a weak-$\ast$ continuous map (i.e. ultraweakly continuous in this case). Moreover, observe that for a fixed positive $x\in\clb$ and an arbitrary positive $y\in\clb\cap L^1=\clb$, we have: 
\be
\tau(S_t^{\prime,\ast}(x)y)=\tau(xS_t^\prime(y))=\tau(xS_t(y))\geq0\quad(\text{as}~S_t(y)\geq0~\forall y\geq0),
\ee
hence $S_t^{\prime,\ast}(x)\geq0,$ i.e. $S_t^{\prime,\ast}$ is a positive map. Thus we have: 
\be
\begin{split}
\|S_t^{\prime,*}\|=\|S_t^{\prime,*}(1)\|_\infty=\|S_t^{\rm{sa},*}(1)\|_\infty&=sup_{\|\rho\|_1\leq1,\rho\in L^1_\IR}|\tau(S_t^{\rm{sa},*}(1)\rho)|\\
&=sup_{\|\rho\|_1\leq1,\rho\in L^1_\IR}|\tau(S_t^{\rm{sa}}(\rho))|\leq1.
\end{split}
\ee

Thus for each $t\geq0$, $S_t^{\prime,*}$ is a contractive map on $L^\infty$ and hence its predual map $S_t^{\prime}$ is a contractive map on $L^1$.  The semigroup property as well as the $C_0$ property of $(S^\prime_t)_{t\geq0}$ on $L^1$ follows from the similar properties of $S_t=S_t^\prime|_{\clb}$ with respect to $\|\cdot\|_\infty$ and the fact that $\|\cdot\|_1\leq\|\cdot\|_\infty$.
\end{proof}

% \begin{lmma}\label{sixfour}
% Let $\cll^{(m)}$ be the generator of the QDS $(T^{(m)}_t)_{t\geq0}$ corresponding to the CP map $\psi^{(m)}(x):=r^{(m)*}xr^{(m)},$ with $r^{(m)}$ satisfying conditions (ii) and (iii) above. Then the QDS $(T^{(m)}_t)_{t\geq0}$ extends to $L^1(tr)$ as a contractive $C_0$ semigroup.
% \end{lmma}
% 
% \begin{proof}
% \end{proof}
% 
% \begin{lmma}
% Each of the QDS $(T_t^{(m)})_{t\geq0}$ for $m=1,2,..p,$ has $L^2$-extensions. 
% \end{lmma}
 
% \begin{proof}
% \end{proof}

\begin{lmma}\label{extends to l2 here as well}
Each of the semigroups $(T_t)_{t\geq0}$ and $(T_t^{(m)})_{t\geq0}$ ($m =1,\ldots,p$) extends to $C_0$-semigroup on $L^2(tr)$.
\end{lmma}

\begin{proof}
For simplicity we will drop the index $m$ and write $r$ for $r^{(m)}$ and $\cll$ for $\cll^{(m)}$. Let $r_k:=\tau_k(r)$ and $\cla_0:=\clc^1(\cla).$ Suppose that $y\in\cla_0$ and $y\geq0.$ A simple computation using assumption {\bf (I)} or {\bf (II)} yields $tr(\cll(y))\leq0.$ Thus by Lemma \ref{L^1-ext}, $(T_t^{(m)})_{t\geq0}$ extends to $L^1$ as a $C_0$ contractive semigroup. 

Let $x\in\cla_0$. Then using contractivity and complete positivity of $T_t^{(m)}$ for each $m$, we have: 
\be
\|T^{(m)}_t(x)\|_2^2=tr(T^{(m)}_t(x^*)T^{(m)}_t(x))\leq tr(T^{(m)}_t(x^*x))=\| T^{(m)}_t(x^*x)\|_1 \leq tr(x^*x)=\|x\|_2^2,
\ee 
 hence each $T_t^{(m)}$ extends to $L^2(tr)$ as a contractive map. The semigroup property and the strong continuity in the $L^2$-norm 
  now follows by the density of  $\cla_0$  in $L^2(tr)$ and the inequality $\|\cdot\|_2\leq\|\cdot\|_\infty$. 

Similar arguments will work for the semigroup $(T_t)_{t\geq0}$.
\end{proof}

\begin{lmma}\label{realpart-2}
Let $\cll_2$ denotes the $L^2$-generator of any of the semigroups $(T_t)_{t\geq0}$ or $(T_t^{(m)})_{t\geq0}$, $m=1,2,\ldots,p$. Then we have:
\[
\clc^1(\cla)\subseteq D(\cll_2)\cap D(\cll_2^*). 
\]
\end{lmma}

\begin{proof}
For simplicity, we drop the superscript $m$ and write $r$ for $r^{(m)}$. Let $r_k:=\tau_k(r)$. We see that formally $\cll_2^{*}(x)=\frac{1}{2}\sum_{k\in\IZ^d}\{r_k[x,r_k^*]+[r_k,x]r_k^*+x[r_k,r_k^*]+[r_k,r_k^*]x\}$. For $x\in \clc^1(\cla)$ we have:
\be\label{L^*}
\hskip-20pt\|\cll_2^*(x)\|\leq\frac{\|r\|}{2}\sum_{k\in\IZ^d}\{\|\delta_k^\dagger(x)\|+\|\delta_k(x)\|\}+\|x\|_\infty\|\sum_{k\in\IZ^d}[r_k,r_k^*]\|,
\ee
where $\delta_k(x):=[x,r_k]$, $x\in\cla$, $k=1,2,\ldots,\infty$. But  $\sum_{k\in\IZ^d}\{\|\delta_k^\dagger(x)\|+\|\delta_k(x)\|\}<\infty$ (see \cite{lingaraj} and \cite{matsui}) and thus it suffices to show the convergence of the third series in (\ref{L^*}). Note that if $r$ satisfies the assumption 
{\bf (I)}, then $[r_k,r_k^*]=0$ for all $k=1,2,\ldots,\infty$ which proves the required convergence. Suppose now  that $r$ satisfies the  assumption {\bf (II)}. As $r=\sum_{g\in\clg}c_g\prod_{j\in\IZ^d}U^{(j)\alpha_j}V^{(j)\beta_j},$ 
\be
\begin{split}
\sum_{k\in\IZ^d}\tau_k\{[r,r^*]\}
=&\sum_{k\in\IZ^d}\sum_{g,h\in\clg}c_g\overline{c_h}\tau_k\{[\prod_{j\in\IZ^d}U^{(j)\alpha_j}V^{(j)\beta_j},
\prod_{j\in\IZ^d} V^{(j),-\beta^\prime_j} U^{(j),-\alpha^\prime_j}]\}\\
=&\sum_{k\in\IZ^d}\sum_{g,h\in\clg}c_g\overline{c_h}[\prod_{j\in\IZ^d}U^{(j+k)\alpha_j}V^{(j+k)\beta_j},
\prod_{j\in\IZ^d}V^{(j+k),-\beta^\prime_j}U^{(j+k),-\alpha^\prime_j}]\\
=&\sum_{k\in\IZ^d}\sum_{g,h\in\clg}c_g\overline{c_h}[\prod_{j\in\IZ^d}U^{(j)\alpha_{j-k}}V^{(j)\beta_{j-k}},
\prod_{j\in\IZ^d}V^{(j),-\beta^\prime_{j-k}}U^{(j),-\alpha^\prime_{j-k}}].
\end{split}
\ee
As  $r\in\cla_{loc}$, we have $\alpha_{j-k}=\beta_{j-k}=\alpha^\prime_{j-k}=\beta^\prime_{j-k}=0\in\IZ_N $ for $|k|\geq M$ for sufficiently large integer $M$. Hence $[r_k,r_k^*]=0$ for such $k.$ Thus the series is actually finite and hence $\|\cll_2^*(x)\|<\infty,$ i.e. 
$\cla_0\subseteq D(\cll_2)\cap D(\cll_2^*)$ as required.
\end{proof}

\begin{lmma}\label{asol kaaj}
For each $m=1,2,\ldots,p$, there exists a C*-algebraic quantum stochastic flow $(j_t^{(m)})_{t\geq0}$ on $\cla$ with domain algebra $\cla_0:=\clc^1(\cla)$,  which gives a quantum stochastic dilation of the semigroup $(T_t^{(m)})_{t\geq0}$. Furthermore $(j_t^{(m)})_{t\geq0}$ satisfies assumptions {\bf A(1)}, {\bf A(2)}, (i) and (ii) of {\bf A(3)} and the hypotheses of Lemma \ref{jevaabehoy} and Corollary \ref{realpart}, with $\cla_0=\clc^1(\cla)$.
\end{lmma}

\begin{proof}
The existence of the dilation $(j_t^{(m)})_{t\geq0}$ is the main result in \cite{lingaraj}. Since the flow $(j_t^{(m)})_{t\geq0}$ is a $\ast$-homomorphic flow for each $m=1,2,...,p$, by Lemma \ref{homomorphism implies structure relations} the flow satisfies assumption {\bf A(1)}. On  $\cla$, $tr$ is a faithful and finite trace and $\cla_0$ is a dense $\ast$-subalgebra of $\cla$, which implies {\bf A(2)}. The statement in the first part of (i) of {\bf A(3)} follows from Lemma \ref{extends to l2 here as well}. From \cite{matsui} it follows that $\cla_0$ is a core for $\cll^{(m)}$, as $\cla_0$ is left invariant by the semigroup $(T_t^{(m)})_{t\geq0}$ for each $m=1,2,\ldots,p$. Furthermore, since $\cla_0$ is dense in $L^2(tr)$ and the $L^2$-extension of $(T_t^{(m)})_{t\geq0}$ leaves it invariant for each $m$, it follows that $\cla_0$ is also a core for $\cll_2$. Thus the second part in (i) of {\bf A(3)} follows. The statement in (ii) of {\bf A(3)} follows by a simple computation. From Lemma \ref{realpart-2} it follows that the hypotheses of Corollary \ref{realpart} hold for $(j_t^{(m)})_{t\geq0}$. We prove that it also satisfies the hypothesis of Lemma \ref{jevaabehoy} as follows:

Let $\delta_j^{(m)}(x):=[x,\tau_j(r^{(m)})],$ $x\in\cla_0.$ Then 
\be\label{l1}
\|\delta_j^{(m)}(x)\|_1\leq2\|\tau_j(r^{(m)})\|_\infty\|x\|_1.
\ee
Thus $\delta_j^{(m)}$ extends to a bounded operator on $L^1$ for each $m$.  Similar result holds for $\delta_j^{\dagger,(m)}$. Since $(j_t^{(m)})_{t\geq0}$ is a 
C*-algebraic quantum stochastic flow, it follows from Lemma \ref{all semigroups are c0} that the semigroup $(j_t^{(m)~e_i,e_j})_{t\geq0}$ is $C_0$ and its 
generator restricted to $\cla_0$ is given by $\cll^{(m)}+\delta_i^{(m)}+\delta^{\dagger(m)}_j$, where $\{e_j\}_{j}$ is an orthonormal basis for the associated 
noise space $k_0$ (see \cite{lingaraj}). By Lemma \ref{L^1-ext}, $(T^{(m)}_t)_{t\geq0}$ extends to  a contractive $C_0$-semigroup on $L^1(tr)$. Let us denote the 
generator of the extended semigroup by $\cll^{(m)}_1$. Clearly, as $\cla_0$ is dense in $L^1(tr)$ and is left invariant by the semigroup $(T^{(m)}_t)_{t\geq0}$, it 
is a core for $\cll^{(m)}_1$. As the maps $\delta_i^{(m)}$ and $\delta^{\dagger(m)}_j$ are bounded maps on $L^1(tr)$, it follows that  the map 
$\cll^{(m)}_1+\delta_i^{(m)}+\delta^{\dagger(m)}_j$ generates a $C_0$-semigroup on $L^1(tr)$ and moreover, $\cla_0$ is a core for this map as well. This proves 
that the semigroup $(j_t^{(m)~e_i,e_j})_{t\geq0}$ extends to a $C_0$-semigroup on $L^1(tr)$. Similar conclusion will hold if we replace  $\delta_i,\delta^\dagger_j$ by $z \delta_i, z\delta^\dagger_j$'s for any fixed $z \in \IC$, hence the hypothesis of Lemma \ref{jevaabehoy} is  satisfied with the total set 
$\clw$ being $\{ ze_i;~z\in \IC,~i \geq 1\}$.
\end{proof}

Now we prove the main theorem of this subsection.

\begin{thm} \label{notun-qsde}
Suppose that $(j^{(m)}_t)_{t\geq0}$ is the quantum stochastic flow with structure maps $\cll^{(m)},\delta_i^{(m)},\delta^{\dagger(m)}_i,i=1,2,\ldots,\infty$, as obtained in Lemma \ref{asol kaaj} for $m=1,2,\ldots,p$. Then the quantum stochastic differential equation:
\be
\begin{split}
dj_t(x)=\sum_{j\in\IZ^d}\sum_{m=1}^p j_t(\delta_j^{\dagger~(m)} (x))&da_j^{(m)}(t)+\sum_{j\in\IZ^d}\sum_{m=1}^p j_t(\delta_j^{(m)}(x))da^{\dagger~(m)}_j(t)+\sum_{m=1}^{p}j_t(\cll^{(m)})dt,\\
&j_0(x)=x\ot I_\Gamma,
\end{split}
\ee
where $x\in\cla_0$ and $\Gamma$ is a symmetric Fock space, admits a $\ast$-homomorphic solution $(j_t)_{t\geq0}$ which is a C*-algebraic quantum stochastic flow and gives a quantum stochastic dilation of the quantum dynamical semigroup $(T_t^\psi)_{t\geq0}$.
\end{thm}

\begin{proof}
We will apply Theorem \ref{stkp} to the collection of quantum stochastic flows: $(j^{(m)}_t)_{t\geq0}$, $m=1,2,\ldots,p$ (see Remark \ref{multi-dimensional}). By Lemma \ref{asol kaaj} the flows satisfy assumptions {\bf A(1)}, {\bf A(2)}, (i) and (ii) of {\bf A(3)} and the hypothesis of Corollary \ref{realpart}. By virtue of Remark \ref{alternative condition}, it suffices to verify conditions (b) and the core property of $\cla_0$  as in (a) of Theorem \ref{stkp}. 

Observe that the map $\sum_{m=1}^{p}\cll^{(m)}$ is the pre-generator of the quantum dynamical semigroup $(T_t^\psi)_{t\geq0}$ (by \cite{matsui}). Moreover, the maps $\delta_j^{(m)},\delta_j^{\dagger(m)},m=1,2,\ldots,p,j=1,2,\ldots,\infty$ are bounded maps on $\cla$. Thus the map $\sum^{p}_{m=1}(\cll^{(m)}+\delta_i^{(m)}+\delta^{\dagger(m)}_j)$ generates a $C_0$-semigroup on $\cla$. Thus condition (b) of Theorem \ref{stkp} holds.

From \cite{matsui} it follows that the dense $\ast$-subalgebra $\cla_0:=\clc^1(\cla)$ is a core for the map $\sum_{m=1}^{p}\cll^{(m)}$. 

Hence by Theorem \ref{stkp} we get a quantum stochastic flow $(j_t)_{t\geq0}$ which satisfies the required quantum stochastic differential equation.
\end{proof}

\begin{crlre}
Let $r^{(m)}=\sum_{g\in\IZ_N}c_gW_g$ for each $m,$ where $W_g=\prod_{j\in\IZ^d}(U^aV^b)^{\alpha_j}$ for $g=\prod_{j\in\IZ^d}\alpha_j$ (as in \cite{lingaraj}). Then the hypotheses of Theorem \ref{notun-qsde} are satisfied and the same conclusion follows, which generalizes the dilation result obtained in \cite{lingaraj}.
\end{crlre}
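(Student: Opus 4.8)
The plan is simply to verify the three standing assumptions (i)--(iii) preceding Lemma~\ref{asol kaaj} for the specific Weyl-type element $r^{(m)}=\sum_{g}c_gW_g$, so that Theorem~\ref{notun-qsde} applies verbatim; I will \emph{not} re-run the Trotter construction. The decisive observation, which I would establish first, is that every such $r^{(m)}$ is \emph{normal}. Indeed, with $a,b\in\IZ_N$ fixed, each $W_g=\prod_{j\in\IZ^d}(U^aV^b)^{(j)\alpha_j}$ is a product over the sites $j$ of integer powers of the \emph{single} unitary $w:=U^aV^b$. Operators at distinct sites commute by the tensor-product structure, while at a common site one meets only powers of $w$, which commute among themselves; hence any two $W_g,W_h$ commute, and grouping by site gives $W_gW_h=\prod_j w^{(j)(\alpha_j+\alpha_j')}=W_hW_g$. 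Moreover $w$ is unitary, so $w^*=w^{-1}$ again lies in the abelian algebra of Laurent powers of $w$. Consequently all the $W_g$ and their adjoints lie in the commutative $\ast$-algebra generated by $\{w^{(j)}:j\in\IZ^d\}$, and therefore $r^{(m)}=\sum_g c_gW_g$ is normal, i.e. $[r^{(m)},r^{(m)*}]=0$.

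With normality in hand the remaining verifications are immediate. Normality gives $[r^{(m)},r^{(m)*}]=0\le0$, which is precisely assumption~(iii). It also lets me invoke Remark~\ref{normality}: since each $r^{(m)}$ is normal one may \emph{drop} the requirement $r^{(m)}\in\cla_{loc}$, so assumption~(ii) need not be checked directly. This is where the real content lies, and I expect it to be the only genuine obstacle: in the present Weyl series the element $r^{(m)}$ is constrained merely by the summability $\sum_g|c_g||g|^2<\infty$ of \cite{lingaraj}, so $r^{(m)}$ is in general \emph{not} finitely supported and need not belong to $\cla_{loc}$; normality is exactly what circumvents this, since then $[r_k,r_k^*]=\tau_k\{[r,r^*]\}=0$ as noted in Remark~\ref{normality}. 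Finally, assumption~(i)---the existence of an E--H dilation $j_t^{(m)}$ of the QDS generated by the Lindbladian attached to $\psi^{(m)}$ for each $m$---holds by the construction of \cite{lingaraj}, which was carried out for precisely this class of $r^{(m)}$.

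Having confirmed (i)--(iii), the hypotheses of Lemma~\ref{asol kaaj}, hence those of Theorem~\ref{notun-qsde}, are met; the theorem then produces the asserted $\ast$-homomorphic E--H dilation $j_t$ with the stated structure maps $(\cll^{(m)},\delta^{(m)},\delta^{\dagger\,(m)})$, built from the $p$-fold Trotter product of the individual flows $j_t^{(m)}$. Since the class of $r^{(m)}$ treated here coincides with the single-$r$ family of \cite{lingaraj}, now permitted simultaneously for $m=1,\dots,p$ in a Trotter product, this recovers and generalizes the dilation result of \cite{lingaraj}. Everything outside the commutativity/normality argument of the first paragraph is a direct appeal to Remark~\ref{normality}, Theorem~\ref{notun-qsde}, and the cited construction in \cite{lingaraj}.
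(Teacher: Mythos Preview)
Your proposal is correct and follows essentially the same route as the paper: verify that each $r^{(m)}$ is normal, so that assumption~(iii) holds trivially and Remark~\ref{normality} disposes of assumption~(ii), while assumption~(i) is supplied by \cite{lingaraj}; then invoke Theorem~\ref{notun-qsde}. The only difference is that you spell out the normality argument (commutativity of the $W_g$'s as powers of the single site unitary $w=U^aV^b$), whereas the paper simply asserts ``it can be verified that $r^{(m)}$ is normal.''
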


\noindent\textbf{Acknowledgment:}
The second author is partially supported by a project on `Non-commutative Geometry and Quantum Groups' funded by Indian National Science Academy and by Swarnajayanti Fellowship and Grant from DST, Govt. of India. The third author is supported by the Bhatnagar Fellowship of CSIR. All the authors also acknowledge the support of UK-India Education and Research Initiative (UKIERI).

\end{document}